\newtheorem{theorem}{Theorem}[section]
\newtheorem{corollary}[theorem]{Corollary}
\newtheorem{example}[theorem]{Example}
\newtheorem{lemma}[theorem]{Lemma}
\newtheorem{proposition}[theorem]{Proposition}
\newtheorem{remark}[theorem]{Remark}
\newenvironment{proof}[1][Proof]{\noindent\textbf{#1.} }{\ \rule{0.5em}{0.5em}}
\begin{document}
%\begin{frontmatter}

\title{Finite Dimensional Representations of Leavitt Path Algebras}
\author{Ayten Ko\c{c} $^{*}\> $, $\>$  Murad  \"{O}zaydın$^{**}$\\ \\}

%\author[Aaddress]{A. Ko\c{c}\corref{mycorrespondingauthor}}
%\cortext[mycorrespondingauthor]{Corresponding author}
%\ead{akoc@iku.edu.tr}

%\author[Maddress]{M. Özaydın}
%\ead{mozaydin@ou.edu}
%\address[Aaddress]{Department of Mathematics and Computer Sciences, \.{I}stanbul K\"{u}lt\"{u}r University, Atak\"{o}y, Istanbul, Turkey}
%\address[Maddress]{Department of Mathematics, University of Oklahoma, Norman OK,  U.S.A.}

%\fntext[Aadress]{ A. Ko\c{c} was supported by TUBITAK (The Scientific and Technological Research Council of Turkey)  BIDEP 2219-International Post-Doctoral Research Fellowship during her visit to the University of Oklahoma where most of this research was done. This research  is currently supported by TUBITAK grant 115F511.} 
\maketitle

\begin{abstract}
When $\Gamma$ is a row-finite digraph we classify all finite dimensional modules 
of the Leavitt path algebra $L(\Gamma)$ via an explicit Morita equivalence given by an effective combinatorial (reduction) algorithm on the digraph  $\Gamma$. The category of (unital) $L(\Gamma)$-modules is equivalent to a full subcategory of quiver representations of $\Gamma$. However the category of finite dimensional representations of $L(\Gamma)$ is tame in contrast to the finite dimensional quiver representations of $\Gamma$ which are almost always wild.
\end{abstract}

\textbf{Keywords:} Leavitt path algebra, quiver representations, Morita equivalence, finite dimensional modules, nonstable K-theory,  graph monoid, dimension function.
%\MSC[2010] 16G60, 16G20

%\end{keyword}
%\end{frontmatter}
\section{Introduction}

Our purpose is to classify all finite dimensional representations of the Leavitt path algebra $L(\Gamma)$ of a row-finite di(rected )graph $\Gamma$. The main tools we employ are that the category of $L(\Gamma)$-modules is equivalent to a full subcategory of quiver representations of $\Gamma$ satisfying a natural isomorphism condition (Theorem \ref{teorem}) and an explicit Morita equivalence defined by a graph theoretical (reduction) algorithm (Theorem \ref{Morita}).\\

From the quiver representation viewpoint a module $M$ is a functor from the small category $\Gamma$ (whose objects are the vertices in $\Gamma$ and whose morphisms are the paths in $\Gamma$) to the category of $\mathbb{F}$-vector spaces. This functor is an $L(\Gamma)$-module if and only if it satisfies the isomorphism condition (I) of Theorem \ref{teorem}. The reduction algorithm helps to reduce the complexity of the source category $\Gamma$, in particular the cycles relevant for finite dimensional indecomposable modules become loops (a single arrow starting and ending at the same vertex).\\

That the Leavitt path algebra $L(\Gamma)$ of a finite $\Gamma$ has a nonzero finite dimensional representation is equivalent to an easy graph theoretical condition: $\Gamma$ has a \textit{maximal sink} or a \textit{maximal cycle}, that is, a sink or a cycle such that there is no path from any other cycle to it. For Leavitt path algebras this property is properly squeezed between two important algebraic notions: it is implied by the cycles of $\Gamma$ being mutually disjoint (which is equivalent to $L(\Gamma)$ having polynomial growth \cite[Theorem 5]{aajz12} and also to
all simple $L(\Gamma)$-modules being finitely presented \cite[Theorem 4.5]{ar14}). In turn, it implies that $L(\Gamma)$ has IBN (Invariant Basis Number), but it is equivalent to neither \cite[Corollary 6.5]{ko1}.\\

Let's describe the classification of finite dimensional $L(\Gamma)$-modules for  a finite $\Gamma$ here (rather than a row-finite $\Gamma$) to avoid the annoying technicality of \textit{finitely many predecessors}.
% A sink or a cycle in $\Gamma$ is \textit{maximal} if there is no path from any other cycle to it. 
%Asagisi "Isomorphism types of finite dimensional simple $L\Gamma$" di, fazla bilgi silindi.
Finite dimensional simple $L(\Gamma)$-modules are of two types: (i) projective modules corresponding to maximal sinks; (ii) the others that are in one-to-one correspondence with pairs $(C,f(x))$ where $C$ is a maximal cycle in $\Gamma$ and $f(x) \in \mathbb{F}[x]$ is an irreducible polynomial with $f(0)=1$.
%Burda polinom icin $ f(x)\neq 1 $ sartina  gerek yok cunku constantlar irreducible degildir.
When $F$ is algebraically closed, type (ii) is parametrized by a disjoint union of finitely many copies of $\mathbb{F}^\times:=\mathbb{F}\setminus \{0\}$,
one for each maximal cycle in $\Gamma$. \\

Finite dimensional nonsimple indecomposable modules correspond to pairs $(C, f(x)^n)$ where $n> 1$, $C$ and $f(x)$ are as above. Hence, if $M$ and $N$ are finite dimensional simples then $Ext(M,N)=0$ unless $M$ and $N$ are isomorphic and of type (ii). Given a maximal sink or a pair $(C,f(x)^n)$, the corresponding indecomposable module and the associated quiver representation can be described explicitly. Once the indecomposables are classified, all finite dimensional representations are classified by Krull-Schmidt. \\ 

We refer to the excellent survey \cite{abr15} for the history and development of Leavitt path algebras. The precise definition of the Leavitt path algebra $L(\Gamma)$ of a digraph $\Gamma$ will be given in section 2 below where we also collect other definitions and facts that will be needed. In particular the concepts of the support subgraph, maximal sinks, maximal cycles and the theorem stating the equivalence of the category of unital $L(\Gamma)$-modules with a full subcategory of quiver representations of $\Gamma$ \cite[Theorem 3.2]{ko1} are relevant for our classification. \\

Section 3 is about a graph theoretic process we call the \textit{reduction algorithm} (defined on a row-finite digraph $\Gamma$) and its consequences. Reduction leaves the nonstable $K$-theory (i.e., the monoid $\mathcal{V}(L(\Gamma))$ of isomorphism classes of finitely generated projective $L(\Gamma)$-modules under direct sum) invariant. This enables us to show that dimension functions on a finite digraph form a free commutative monoid on the maximal sinks and cycles of $\Gamma$ (Theorem \ref{27}). The monoid of dimension functions also turn out to be isomorphic to the monoid homomorphisms from $\mathcal{V}(L(\Gamma))$ to the natural numbers  $\mathbb{N}$ under addition.\\

Section 4 starts with Theorem \ref{Morita} showing that the Leavitt path algebras of a digraph and any reduction of it are Morita equivalent (which is given explicitly in terms of the corresponding quiver representations). This generalizes the fact that \textit{source elimination} is Morita invariant \cite[Proposition 10]{abr15}. We classify all finite dimensional representations by first reducing the problem to the (finite) support subgraph, then applying the reduction algorithm and finally pulling back to the Leavitt path algebra via the Morita equivalence of Theorem \ref{Morita}.\\

The category of finite dimensional $L(\Gamma)$-modules, for a row-finite digraph $\Gamma$, is equivalent to the direct sum of the categories of finite dimensional vector spaces indexed by the maximal sinks with finitely many predecessors and the categories of finite dimensional modules over the algebra of Laurent polynomials indexed by the maximal cycles with finitely many predecessors (Theorem \ref{notasyon}). In particular the category of finite dimensional representations of the Leavitt path algebra $L(\Gamma)$ of a finite digraph $\Gamma$ is tame in contrast to the finite dimensional quiver representations of $\Gamma$ which are wild unless the connected components of the undirected $\Gamma$ are Dynkin diagrams of types $A, \> D, \> E, \> \tilde{A}, \> \tilde{D}, \> \tilde{E}$ by a celebrated theorem of Gabriel \cite{dw05}.\\
 
 \section{Preliminaries}

%By the well-known Wedderburn-Artin Theorem \cite{2}, any finite
%dimensional semisimple algebra $A$ over a field $K$ is isomorphic to a
%direct sum of finite dimensional full matrix rings over suitable division
%rings. In this paper we will consider the special case where \ all division
%rings are exactly the field $K.$ All such finite dimensional semisimple
%algebras arise as a finite dimensional Leavitt path algebra as studied in 
%\cite{1}. 

%The Leavitt path algebras are introduced independently by Abrams-Aranda Pino
%in \cite{3} and by Ara-Moreno-Pardo in \cite{AMP}. 
%Leavitt path algebras are the counterparts of the C*-algebras
%Many papers on Leavitt path algebras appeared in ......

%literature since then. In the following discussion, we are particularly
%interested in answering some combinatorial questions on the finite
%dimensional Leavitt path algebras. Our main result is ......
\medskip
\subsection{Leavitt Path Algebras}
\medskip
A \textbf{di}(rected )\textbf{graph} $\Gamma$ is a four-tuple $(V,E,s,t)$ where $V$ is the set of vertices, $E$ is the set of arrows (directed edges), $s$ and $t:E \longrightarrow V$ are the source and  the target functions. The digraph $\Gamma$ is \textit{finite} if $E$ and $V$ are both finite. $\Gamma$ is \textit{row-finite} if $s^{-1}(v)$ is finite for all $v$ in $V$. Given $V' \subseteq V$ the \textit{induced subgraph} on $V'$ is $\Gamma':= (V',E',s',t')$ with $E':=  s^{-1}(V') \cap t^{-1}(V')$ ; $\> s':= s\vert_{E'}$ ; $\> t':= t \vert_{E'}$. A subgraph is \textit{full} if it is  the induced subgraph on its vertices.
 
 \begin{remark}
 A digraph is also called an "oriented graph" in graph theory, a "diagram" in topology and category theory,  a "quiver" in representation theory, usually just a "graph" in $C^*$-algebras and Leavitt path algebras. The notation above for a digraph is standard in graph theory. However $Q=(Q^0,Q^1, s,t)$ is more common in quiver representations while $E=(E^0, E^1,s,r)$ is mostly used in graph $C^*$-algebras and in Leavitt path algebras. We prefer the graph theory notation which involves two more letters but no superscripts. As in quiver representations we view $\Gamma$ as a small category, so "arrow" is preferable to "edge", similarly for "target" versus "range".
 \end{remark}
 
A vertex $v$ in $V$ is a \textit{sink} if $s^{-1}(v)= \emptyset$; it is a \textit{source} 
if $t^{-1}(v)= \emptyset$. An \textit{isolated vertex} is both a source and a sink. If $t(e)=s(e)$ then $e$ is  a \textit{loop}. A \textbf{pseudo-source} is a vertex $v$ where $t^{-1}(v)$ consists of a single loop. A \textit{path} of length $n>0$ is a sequence $p =e_{1}\ldots e_{n}$ such
that $t(e_{i})=s(e_{i+1})$ for $i=1,\ldots ,n-1.$ The source of $p$ is $s(p
):=s(e_{1})$  and the target of $p$ is $t(p ):=t(e_{n})$. A path $p$ of length 0 consists of a single vertex $v$ where $s(p) :=v $ and $t(p) := v$. We will denote the length of $p$ by $l(p)$.  A path $C=e_1e_2 \cdots e_n$ with $n>0$ is a \textit{cycle} if $s(C )=t(C )$ and $s(e_{i})\neq s(e_{j})$ for $i\neq j$. An arrow $e \in E$ is an \textit{exit} of the cycle $C=e_1e_2 \cdots e_n$ if there is an $i$ such that $ s(e)=s(e_i)$ but $e\neq e_i$. The digraph $\Gamma$ is \textit{acyclic} if it has no cycles. \\
%An \textit{infinite path} is an infinite sequence of arrows $e_1e_2e_3 \cdots$ such that  $t(e_k)=s(e_{k+1})$ for $k=1,2,3, \cdots$. \\

There is a preorder defined on the set of sinks and cycles in $\Gamma$: we say that a cycle $C$ \textbf{connects to} a sink $w$  denoted by $C \leadsto w$ if there is a path from $C$ to $w$. Similarly $C\leadsto D$  if there is a path from the cycle C to the cycle D. This is a partial order if and only if the cycles in $\Gamma$ are mutually disjoint. A cycle is \textbf{minimal} with respect to $\leadsto$ if and only if it has no exit (sinks are always minimal). A cycle $C$ is \textbf{maximal} if no other cycle connects to $C$ (in particular, a maximal cycle is disjoint from all other cycles). A sink $w$ is maximal if there is no cycle $C$ which connects to $w$. \\

The \textbf{graph monoid $S(\Gamma)$} is the (additive) commutative monoid generated by $V$ subject to the relations: $v =\sum_{e \in s^{-1}(v)} te$ for all $ v \in V $ with $ 0< \vert s^{-1}(v) \vert < \infty$.\\

%Hence there exists a one to one correspondence between the dimension functions of $\Gamma$ and nontrivial (semigroup) homomorphisms from $S(\Gamma)$ to natural numbers (under addition). 

Given a digraph $\Gamma,$ the \textit{extended digraph} of $\Gamma$ is $\tilde{\Gamma} := (V,E \sqcup E^*, s~,t~)$ where $E^* :=\{e^*~|~ e\in E \}$,  the functions $s$ and $t$ are  extended as $s(e^{\ast}):=t(e)$ and $t(e^{\ast }):=s(e) $ for all $e \in E$. Thus the dual arrow $e^*$ has the opposite orientation of $e$. We want to extend $*$ to an operator defined on all paths of $\tilde{\Gamma}$: Let $v^*:=v$ for all $v$ in $V$, $(e^*)^*:=e $ for all $e$ in $E$ and $p^*:= e_n^* \ldots e_1^*$ for a path $p=e_1 \ldots e_n$ with $e_1, \ldots , e_n $ in $E \sqcup E^* $. In particular $*$ is an involution, i.e., $**=id$. \\
%Fazla bilgi $\mathbb{Z}$ grading lie ilgli\\
%We also have the \textit{degree }of a path defined as follows: \\
%$$ deg(v):=0  ~~~{\rm for ~~ all~~} v ~~{\rm in} ~~V, $$  
%$$deg(e):=1 ~~{\rm and}~~ deg(e^*):=-1~~ {\rm for ~~ all~~} e ~~{\rm in}~~E,$$ 
%$$deg(p) :=\sum_{i=1}^n deg(e_i) ~~~ {\rm with ~~} p=e_1\ldots e_n.$$\\   

%To define a Cohn-Leavitt path algebra we also need a(n arbitrary) subset $\Lambda$ of $\left\{X \in \Pi : \left|X \right| < \infty \right\}$, the finite parts of the partition $\Pi$.\\

The \textbf{ Leavitt path algebra} of a digraph $\Gamma$ with coefficients in the field $\mathbb{F}$, as defined in \cite{aa05} and \cite{amp07}, is the $\mathbb{F}$-algebra $L_{\mathbb{F}}(\Gamma)$ generated by $V \sqcup E \sqcup E^*$ satisfying:\\
\indent(V)  $\quad \quad vw ~=~ \delta_{v,w}v$ for all $v, w \in V ,$ \\
\indent($E$)  $\quad \quad s ( e ) e  = e=e t( e)  $ for all $e \in E\sqcup E^*$, \\
%\indent(E*)  $t(e)e^* = e^*s(e ) =e^* $ for all $e \in E$ , \\
\indent(CK1) $\quad e^*f ~ = ~ \delta_{e,f} ~t(e)$ for all $e,f\in E$, \\
\indent(CK2) $\quad v~  = ~ \sum_{s(e)=v} ee^*$  for all $v$ with $0< \vert s^{-1}(v) \vert < \infty $.\\

%That is, $L(\Gamma, \Pi, \Lambda)$ is the quotient of the noncommutative, nonunital (i.e., no constant terms) free algebra on $V\cup E \cup E^*$ by the (two sided) ideal generated by the relations above.  

We will usually suppress the subscript $\mathbb{F}$ when we denote our algebras. If the digraph $\Gamma$ is fixed and clear from the context we may abbreviate $L(\Gamma)$ to $L$. From now on we will omit the parentheses to reduce notational clutter when the source and target functions $s$, $t$ are applied. \\

The relations (V) simply state that the vertices are mutually orthogonal idempotents. The relations (E) say that $e \in seLte$ and $e^* \in teLse$ for every $e \in E$. The Leavitt path algebra $L$ as a vector space is $\bigoplus vLw$ where the sum is over all pairs $(v,w) \in V \times V$ since $V\sqcup E\sqcup E^*$ generates $L$. If we only impose the relations (V) and ($E$) then we obtain $\mathbb{F}\tilde{\Gamma}$, the \textbf{path }(or \textbf{quiver}) \textbf{algebra} of the extended digraph $\tilde{\Gamma}$ : The paths in $\tilde{\Gamma}$ form a vector space basis of $\mathbb{F} \tilde{\Gamma}$, the product $pq$ of two paths $p$ and $q$ is their concatenation if $tp=sq$ and 0 otherwise. \\

We get the\textit{ Cohn path algebra} $C(\Gamma)$ when we impose the relations (CK1) in addition to (V) and ($E$). Hence $L(\Gamma)$ is a quotient of $C(\Gamma)$, which is a quotient of $ \mathbb{F}  \tilde{\Gamma}$. The abbreviation CK stands for Cuntz-Krieger. \\

For any arrow $e$ in $E$ we have $e^*e=te$ by (CK1). Consequently $p^*p=tp$ for any path $p$ of $\Gamma$. Hence for any two paths $p$ and $q$ of $\Gamma$ if $q=pr$ then $p^*q=p^*pr=r$, if $p=qr$ then $p^*q=(q^*p)^*=r^*$. As $e^*f=0$ when $e\neq f$ by (CK1), $p^*q=0$ unless the path $q$ is an initial segment of the path $p$  (i.e., $p=qr$) or $p$ is an initial segment of $q$ (i.e., $q=pr$). Thus the Cohn path algebra $C(\Gamma)$ and the Leavitt path algebra $L(\Gamma)$ are spanned by $\left\{pq^*\right\}$ where $p$ and $q$ are paths of $\Gamma$ with $tp=tq$. In fact this is a basis for $C(\Gamma)$
%Fazla bilgi Cohn lie ilgili\\
%and there is an analogous basis for $C(\Gamma,\Pi)$ as well. \\
which can be shown by defining an epimorphism from $C(\Gamma)$ to a reduced semigroup algebra with this basis (we will not need this fact). In $L(\Gamma)$ however $\{pq^* : tp=tq \}$ is linearly dependent because of (CK2).\\

The algebras $ \mathbb{F}\Gamma$, $ \mathbb{F}\tilde{\Gamma}$, $C(\Gamma)$ and $L(\Gamma)$ are unital if and only if $V$ is finite, in which case the sum of all the vertices is the unit: It is clear that $\sum_{v \in V} v=1$ when $V$ is finite. For the converse, a given element in any these algebras is a finite linear combination of paths in $\tilde{\Gamma}$ and we can pick $v \in V$ which is not the source of any of these paths if $V$ is infinite. Now left multiplication by $v$ gives zero, so there is no unit in any of these algebras since $\mathbb{F}\Gamma$ is a subalgebra of $L(\Gamma)$ by \cite[Lemma 1.6]{goo09} and thus $v\neq 0$ in $L(\Gamma)$ for every $v \in V$, hence also in $C(\Gamma)$.\\ 

We will need the following easy and well-known fact: 
\begin{lemma} \label{ez}
If $v=sC$ where $C$ is a cycle  without exits in $\Gamma$ then $vL(\Gamma)v \cong \mathbb{F}[x,x^{-1}]$ via $x \leftrightarrow C^*$.\end{lemma}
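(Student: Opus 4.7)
The plan is to identify a spanning set for $vL(\Gamma)v$ consisting of powers of $C$ and $C^*$, and then use the natural $\mathbb{Z}$-grading on $L(\Gamma)$ to check linear independence.

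First, write $C = e_1 \cdots e_n$ with $s(e_1) = v = t(e_n)$. The no-exits hypothesis means each $s(e_i)$ is the source of exactly one arrow (namely $e_i$), so (CK2) collapses to $s(e_i) = e_i e_i^*$. A straightforward induction on the length of the initial segment $p_i := e_1 \cdots e_i$ then gives $p_i p_i^* = v$ (telescoping from the innermost pair outward) and, symmetrically, $p_i^* p_i = t(e_i)$. In particular $CC^* = v$ and $C^*C = v$, so $C$ and $C^*$ are mutually inverse in $vL(\Gamma)v$.

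Second, I would enumerate the paths that contribute to $vL(\Gamma)v$. Any path $p$ with $s(p) = v$ must begin with $e_1$ (the only arrow out of $v$), then $e_2$, and so on; hence $p = C^j \cdot p_i$ for unique $j \geq 0$ and $0 \leq i < n$. Given another such $q = C^k p_{i'}$ with $t(p) = t(q)$, uniqueness of the walk forces $i = i'$. Then
\[
pq^* \;=\; C^j\, p_i\, p_i^*\, (C^*)^k \;=\; C^j\, v\, (C^*)^k \;=\; C^j (C^*)^k,
\]
which reduces to $C^{j-k}$ or $(C^*)^{k-j}$ via the identities from the previous paragraph. Hence $\{v\} \cup \{C^j : j \geq 1\} \cup \{(C^*)^k : k \geq 1\}$ spans $vL(\Gamma)v$, and the assignments $1 \mapsto v$, $x \mapsto C^*$, $x^{-1} \mapsto C$ extend to a well-defined, surjective algebra homomorphism $\phi: \mathbb{F}[x,x^{-1}] \to vL(\Gamma)v$ (well-defined because $CC^* = C^*C = v$).

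For injectivity I would invoke the standard $\mathbb{Z}$-grading on $L(\Gamma)$ (with $\deg v = 0$, $\deg e = 1$, $\deg e^* = -1$) under which $\deg(C^j) = jn$ and $\deg((C^*)^k) = -kn$, placing them in pairwise distinct homogeneous components. Each is moreover nonzero: the identity $C^j(C^j)^* = v$ (again by induction from $CC^* = v$), combined with the fact already noted in the excerpt that $v \neq 0$ in $L(\Gamma)$, forces $C^j \neq 0$, and symmetrically $(C^*)^k \neq 0$. This gives linear independence of the spanning set, so $\phi$ is an isomorphism. The only real subtlety is the telescoping collapse $p_i p_i^* = v$; the rest is bookkeeping of paths and the familiar grading argument that promotes a spanning set to a basis.
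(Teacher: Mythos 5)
Your proof is correct and follows the same overall strategy as the paper's: establish $CC^* = C^*C = v$ from (CK1) and the no-exit form of (CK2), define the homomorphism $x \mapsto C^*$, and check surjectivity by collapsing the spanning elements $pq^*$ to powers of $C$ and $C^*$. (The paper performs the collapse in the opposite direction, appending arrows to get $pq^* = pe(qe)^*$ until both paths return to $v$, whereas you decompose $p = C^j p_i$ and telescope $p_i p_i^*$; these are the same computation.) The one genuine divergence is injectivity: the paper cites linear independence of $\{C^n\}_{n\geq 1}$ inside the path algebra $\mathbb{F}\Gamma \subseteq L(\Gamma)$, while you use the $\mathbb{Z}$-grading together with the nonvanishing $C^j(C^j)^* = v \neq 0$. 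Your version is slightly more self-contained --- it needs only $v \neq 0$ rather than the embedding of the whole path algebra, and it treats the negative powers $(C^*)^k$ on the same footing as the positive ones, a point the paper's one-line argument leaves implicit.
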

\begin{proof}
Since $C$ has no exit $ee^*=se$ by (CK2) for any arrow $e$ on $C$. So $CC^*=v$ and also $C^*C=v$ by (CK1). Since $v=1$ in $vL(\Gamma)v$ we get $C^{-1}=C^*$ and a homomorphism from $\mathbb{F}[x,x^{-1}]$ to $vL(\Gamma)v$ is defined by sending $x$ to $C^*$. This homomorphism is onto: $vL(\Gamma)v$ is spanned by $\{ pq^* \> \vert \> sp=v=sq , \> tp=tq\}$. If $tp\neq v$ then $tp=ee^*$ by (CK2) where $e$ is the unique arrow with $se=tp$. So $pq^*=pe(qe)^*$ and we can repeat as needed to get $pq^*=C^mC^{*n}=C^{m-n}$ for some $m$ and $n$ in $\mathbb{N}$.
% but $CC^*=v$ by (CK2) again  and $v=1$ in $vL(\Gamma)v$. Hence $\{C^n\}_{n \in \mathbb{Z}}$ spans $vL(\Gamma)v$ where $C^0:=v$. Thus the algebra homomorphism $\mathbb{F}[x,x^{-1}] \rightarrow vL(\Gamma)v$ which sends $x $ to $C^*$ is onto.
%Aciklama: onto: cunku $vL(\Gamma)v$ deki her eleman C nin bir kuvveti seklinde ve bunu goruntu kabul eden $F{x,x^{-1}]$ de x in bir kuvveti vardir.
This homomorphism is also one-to-one because $\{C^n\}_{n \geq 1}$ is linearly independent in the path algebra $\mathbb{F}\Gamma$, hence also in $vL(\Gamma)v \subseteq L(\Gamma)$. 
%Aciklama: $\mathbb{F}\Gamma$ da yollar, lineer bagimsizdir dolayisiyla $\{C^n\}_{n \in \mathbb{N}}$ yollarida lineer bagimsizdir. Ayrica $\mathbb{F}\Gamma $, $L\Gamma$ nin bir alt cebiridir. Aralarindaki map 1-1 dir. Dolayisiyla bunlar Leavitt de de lineer bagimsizdir.  $vL(\Gamma)v \subseteq L(\Gamma)$ oldugundan ve $vL(\Gamma)v$ deki her eleman C nin bir kuvveti seklinde oldugundan bu $\{C^n\}_{n \geq 1}$ yollar,  $vL(\Gamma)v $ da da lineer bagimsiz olur. Kernel olsaydi yani o dan fakli olsaydi, icinde bir laurent polinomu olur, x in bir kuvvetiyle carp buda kernel da olur yani normal bir polinom kernel da olur. Polinomun goruntusu 0 a gidiyor. Polinomun icindeki x in kuvvetleri C nin kuvvetlerine gider dolayisiyla C nin kuvvetlerinin bir kombinasyonu =0 elde edildi. Ama bunlar lineer bagimsizdi. dolayisiyla katsayilar 0 olmali yani dolayisiyla polinom=Laurent polinomu=0 cikar yani kernel =0 i.e., 1-1.
\end{proof}\\

There is a $\mathbb{Z}$-grading on $ \mathbb{F}  \tilde{\Gamma}$ given by $deg(v)=0$ for $v$ in $V$, $deg(e)=1$ and $deg(e^*)=-1$ for $e$ in $E$. This defines a grading on all our algebras since the relations are all homogeneous. The linear extension of $*$ on paths induces a grade-reversing involutive anti-automorphism (i.e., $deg(\alpha^*)=-deg(\alpha)$ and $(\alpha \beta)^*=\beta^* \alpha^*$). Hence these algebras are $\mathbb{Z}$-graded $*$-algebras and the (graded) categories of left modules and right modules are equivalent for any of these algebras.\\

%Another special case is obtained when $\Lambda= \left\{ X \in \Pi :  0<\left|X \right| < \infty \right\}$, we have $L(\Gamma,\Pi) :=L(\Gamma,\Pi, \Lambda)$, \textit{the Leavitt path algebra of the separated digraph }$(\Gamma, \Pi)$. If both $\Pi$ and $\Lambda$ are as above then $L(\Gamma,\Pi, \Lambda)=L(\Gamma)$, the \textit{Leavitt path algebra} of $\Gamma$.\\  

%The relations (V) simply state that the generators V are mutually orthogonal idempotents. If we impose only the relations (V) and  ($\tilde{E}$), we obtain the path (or the quiver) algebra of the extended digraph $ \tilde{\Gamma}$.
%These already imply that any element can be expressed as a linear combination of paths of $\tilde{\Gamma} $. If, in addition to (V) and ($\tilde{E}$), we also impose the relations (C), we get the Cohn algebra of $\Gamma$. [A-K] The notation (CK) stands for Cuntz-Krieger. \\

A subset $H$ of $V$ is \textit{hereditary} if for any path $p$, $sp \in H$ implies that $tp \in H$; $H$ is \textit{saturated} if $\{ te : se=v \} \subseteq H$ implies that $v \in H$, for every  $v \in V$ with $0<\vert s^{-1}(v) \vert < \infty $ \cite{aa05}. If $I$ is an ideal of $L(\Gamma)$ and $p$ is a path in $\Gamma$ with $sp \in I$ then $tp=p^*p=p^*(sp)p \in I$, also if $\{ te : se=v \} \subseteq I$ then $v=\sum_{se=v}  ee^*=\sum_{se=v}  e(te)e^* \in I$, so $I\cap V$ is hereditary and saturated. We have a Galois connection between the subsets of $V$ and the ideals of $L(\Gamma)$ given by $S \mapsto (S)$ and $I \mapsto I\cap V$ which gives a bijection between hereditary saturated subsets of $V$ and graded ideals of $L(\Gamma)$ when $\Gamma$ is a row-finite digraph \cite[Theorem 5.3]{amp07}.

\medskip

\subsection{$L(\Gamma)$-Modules and Quiver Representations}
\medskip
As each $v$ in $V$ is an idempotent, $vL$ is a cyclic projective $L$-module. We have  
a homomorphism $\phi_v: \bigoplus_{se=v} teL\longrightarrow vL$ sending $(\alpha_e)$ to $ \sum e\alpha_e$ for all $v$ with $0< \vert s^{-1}(v) \vert < \infty $. The relations (CK1) and (CK2) imply that this homomorphism is an isomorphism whose inverse sends $\beta$ in $vL$ to $(e^*\beta)$ in $\bigoplus teL$. \\

In fact, $L(\Gamma)$ can be defined as the Cohn localization of the path algebra $\mathbb{F}\Gamma$, without mentioning $E^*$, (CK1) or (CK2), making the homomorphisms analogous to the $\phi_v$ above  from $\bigoplus te\mathbb{F}\Gamma$ to $v\mathbb{F}\Gamma$ invertible \cite[Corollary 4.2]{ab10}, \cite[Proposition 3.3]{ko1}.\\

The isomorphisms $\phi_v$ also enable us to define a monoid homomorphism from the graph monoid $S(\Gamma)$ to $\mathcal{V} (L)$, the nonstable K-Theory of $L$, by sending $[v]$ to $[vL]$. A deep and important result in the subject is: 

\begin{theorem} (\cite[Theorem 3.5]{amp07}) The monoid homomorphism from $S(\Gamma)$ to $\mathcal{V} (L(\Gamma))$ defined  above is an isomorphism.
\end{theorem}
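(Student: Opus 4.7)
The plan is to prove well-definedness, surjectivity, and injectivity separately. Well-definedness is essentially free from the material immediately preceding the statement: the maps $\phi_v \colon \bigoplus_{se=v} teL \longrightarrow vL$ were shown to be isomorphisms using (CK1) and (CK2) whenever $0 < |s^{-1}(v)| < \infty$, which translates in $\mathcal{V}(L(\Gamma))$ to $[vL] = \sum_{se=v} [teL]$. These are exactly the defining relations of the graph monoid $S(\Gamma)$, so $[v] \mapsto [vL]$ descends to a well-defined monoid homomorphism.

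For surjectivity, I would use the fact that $L := L(\Gamma)$ has a set of local units $\{e_{V'} := \sum_{v \in V'} v\}$ indexed by finite $V' \subseteq V$. Every class in $\mathcal{V}(L)$ is represented by an idempotent $e$ in some $M_n(L)$, and after conjugation we may arrange $e \in M_n(e_{V'} L e_{V'})$ for some finite $V'$. Since $e_{V'} L = \bigoplus_{v \in V'} vL$, every f.g.\ projective is a summand of a finite direct sum of $vL$'s, which together with the relations $[vL] = \sum_{se=v} [teL]$ lets one rewrite any class as a sum of generators $[vL]$.

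Injectivity is the main obstacle. The cleanest approach is via Bergman's theorem on universal localizations. By \cite[Corollary 4.2]{ab10} (alluded to in the paragraph above the statement), $L(\Gamma)$ is the Cohn--Bergman universal localization of the path algebra $\mathbb{F}\Gamma$ obtained by forcing each map $\bigoplus_{se=v} te\mathbb{F}\Gamma \longrightarrow v\mathbb{F}\Gamma$ (for $v$ with $0 < |s^{-1}(v)| < \infty$) to be an isomorphism. The path algebra $\mathbb{F}\Gamma$ is hereditary and its indecomposable projectives are precisely the mutually non-isomorphic $\{v\mathbb{F}\Gamma\}_{v\in V}$, so $\mathcal{V}(\mathbb{F}\Gamma) \cong \mathbb{N}V$, the free commutative monoid on $V$. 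Bergman's theorem on the effect of universal localization on projective-module monoids then identifies $\mathcal{V}(L(\Gamma))$ with $\mathbb{N}V$ modulo exactly the congruence generated by $[v] = \sum_{se=v} [te]$, which is the presentation of $S(\Gamma)$. The homomorphism in the statement becomes the identification.

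The hard part in any approach is controlling \emph{which} relations hold in $\mathcal{V}(L(\Gamma))$: no Grothendieck-type cancellation or unforeseen identification should appear beyond the ones forced by the $\phi_v$. As an alternative route, one could induct on finite $\Gamma$ (handling acyclic digraphs via source elimination, and cycles via the explicit description $vLv \cong \mathbb{F}[x,x^{-1}]$ of Lemma \ref{ez}), and then pass to row-finite $\Gamma$ by a direct-limit argument using that both $S(-)$ and $\mathcal{V}(L(-))$ are continuous functors on row-finite digraphs with full inclusions. Either way the core content is a confluence/normal-form argument that refines any identity in $\mathcal{V}(L(\Gamma))$ to a sum of $[teL]$'s indexed by paths of increasing length and rules out non-trivial cancellation.
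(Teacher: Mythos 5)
The paper does not prove this statement at all: it is imported verbatim from Ara--Moreno--Pardo \cite[Theorem 3.5]{amp07} and used as a black box, so there is no internal proof to compare against. Measured against the actual proof in the cited source, your sketch is essentially the right one: well-definedness from the isomorphisms $\phi_v$, and then Bergman's universal-localization theorem applied to the hereditary path algebra $\mathbb{F}\Gamma$ (whose $\mathcal{V}$ is free on $V$) to identify $\mathcal{V}(L(\Gamma))$ with the quotient of $\mathbb{N}V$ by exactly the congruence generated by $[v]=\sum_{se=v}[te]$, followed by a direct limit to reach row-finite graphs. Two corrections are worth making. First, your surjectivity paragraph as written has a gap: knowing that every finitely generated projective is a \emph{summand} of $\bigoplus_{i} v_iL$ does not by itself let you rewrite its class as a sum of generators $[vL]$; that conclusion is precisely what Bergman's computation of the whole monoid delivers, so surjectivity should be folded into that step rather than argued separately. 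Second, Bergman's theorem is a statement about unital rings, so the finite-graph case must come first and the direct-limit step is not an ``alternative route'' but a necessary part of the argument; moreover the limit must be taken over \emph{complete} subgraphs (each non-sink vertex included together with all of $s^{-1}(v)$), not merely full ones, since otherwise the relation (CK2) is not respected and one does not get algebra maps $L(\Gamma')\to L(\Gamma)$. With those repairs your outline matches the proof in \cite{amp07}.
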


We will work in the category $\mathfrak{M}_{L }$ of unital (right) modules over $L:=L_{\mathbb{F}} (\Gamma)$. However $L$ has a 1 if and only if the vertex set $V$ is finite. Even if $V$ is infinite, we define a unital $L$-module as a module $M$ with the property that $ML=M$, i.e., for any $m$ in $M$ we can find $\lambda_1, \lambda_2, \ldots , \lambda_n$ in $L$ and $m_1, m_2, \ldots, m_n$ in $M$ so that $m=m_1\lambda_1+m_2 \lambda_2 +\cdots + m_n\lambda_n$. This condition is equivalent to the standard definition of unital (when $L$ has a 1) since $m1=(m_1\lambda_1+m_2 \lambda_2 +\cdots + m_n\lambda_n)1=m_1\lambda_1 1 +m_2 \lambda_2 1+\cdots + m_n\lambda_n1=m_1\lambda_1+m_2 \lambda_2 +\cdots + m_n\lambda_n=m$. The projective modules $vL$ for all $v \in V$ are unital. The category of unital modules is an abelian category with sums since it is closed under taking quotients, submodules, extensions, (arbitrary) sums (but not infinite products: if $V$ is infinite then the $L$-module $L^V$ is not unital).\\

For  any $M$ in $\mathfrak{M}_L$ and any $v$ in $V$ the linear map sending $f \in Hom_L (vL, M)$ to $f(v) \in  Mv$ gives an isomorphism. Applying the cofunctor $Hom_L( \>\underline{ \> \> \> } \>, M)$ to $\phi_v$ we get an isomorphism $Mv \longrightarrow \bigoplus Mte$. Thus, assigning the vector space $Mv$ to each vertex $v$ and the linear transformation from $Mse$ to $Mte$ given by right multiplication by $e$ for all $e \in E$ defines a quiver representation of $\Gamma$ satisfying the isomorphism conditions imposed by the $\{\phi_v \}$.  \\

%lineer map ler vector space homo.dir.
 
 Recall that a quiver representation is a functor from the small category $\Gamma$ with objects $V$ and morphisms given by paths in $\Gamma$. The linear transformation assigned to the path $p$ from $Msp$ to $Mtp$ is given by right multiplication by $p$. Conversely, any quiver representation of $\Gamma$ satisfying the isomorphism conditions above yields an  $M $  in $\mathfrak{M}_L$: 

\begin{theorem} \label{teorem} \cite[Theorem 3.2]{ko1}
If $\Gamma= (V,E,s,t)$ is a row-finite digraph then the category $\mathfrak{M}_{L}$
% of unital right $L$-modules
 is equivalent to the full subcategory of quiver representations $\rho$ of $\Gamma$ satisfying the following condition $(I)$:\\
  $$\textit{For every nonsink } v\in V, \> \> \> \>  (\rho(e))_{se=v}: \rho ( v) \longrightarrow \bigoplus\limits_{se=v } \rho(te) \textit{ is an isomorphism.}$$ The module $M$ corresponding to the quiver representation $\rho$ is $M=\bigoplus_{v\in V} \rho(v)$.
\end{theorem}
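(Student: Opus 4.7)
The plan is to construct a pair of functors between $\mathfrak{M}_{L}$ and the full subcategory $\mathfrak{R}_{I}$ of quiver representations of $\Gamma$ satisfying condition $(I)$, and verify that they are mutually quasi-inverse.

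For the functor $\Phi:\mathfrak{M}_{L}\to \mathfrak{R}_{I}$, given $M$ in $\mathfrak{M}_{L}$, I would set $\rho(v):=Mv$ and $\rho(e):Mse\to Mte$ equal to right multiplication by $e$; well-definedness uses relation $(E)$. For a path $p=e_{1}\cdots e_{n}$, functoriality is automatic because right multiplication by $p$ equals the composition of right multiplications by the $e_{i}$. The key verification is condition $(I)$: for each nonsink $v$ with $0<|s^{-1}(v)|<\infty$, apply $\mathrm{Hom}_{L}(\underline{\quad},M)$ to the isomorphism $\phi_{v}:\bigoplus_{se=v}teL\to vL$ noted in Section 2. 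Since $\mathrm{Hom}_{L}(wL,M)\cong Mw$ naturally by evaluation at $w$, and $\phi_{v}$ sends $(te)$ to $e$, the induced isomorphism $Mv\to \bigoplus_{se=v}Mte$ is exactly $(\rho(e))_{se=v}$. A module homomorphism $f:M\to N$ restricts to maps $Mv\to Nv$ commuting with right multiplication by arrows, giving a morphism of representations.

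For the quasi-inverse $\Psi:\mathfrak{R}_{I}\to \mathfrak{M}_{L}$, I would put $M:=\bigoplus_{v\in V}\rho(v)$ and declare that each $v\in V$ acts as the projection onto the $v$-summand, each $e\in E$ acts as $\rho(e)$ on $\rho(se)$ and $0$ on every other summand, and each $e^{*}\in E^{*}$ acts on $\rho(te)$ by first inverting the isomorphism $(\rho(f))_{sf=se}:\rho(se)\to\bigoplus_{sf=se}\rho(tf)$ supplied by $(I)$ on the $e$-component and then including $\rho(se)$ into $M$ (and $0$ on all other summands). The relations $(V)$ and $(E)$ are transparent; $(\mathrm{CK}1)$ holds because $e^{*}\circ f$ picks out the $e$-component of the image of $\rho(f)$, which is $\delta_{e,f}\,\mathrm{id}_{\rho(te)}$; $(\mathrm{CK}2)$ holds at a finite nonsink $v$ because $\sum_{se=v}ee^{*}$ is precisely the composition of $(\rho(e))_{se=v}^{-1}$ with $(\rho(e))_{se=v}$, which is the identity on $\rho(v)$. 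Unitality $ML=M$ is immediate since $\rho(v)\cdot v=\rho(v)$. A natural transformation $\eta:\rho\to\sigma$ assembles into $\bigoplus\eta_{v}$, which commutes with the generators by the naturality squares plus condition $(I)$ (for $E^{*}$).

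Finally I would check $\Phi\Psi\cong\mathrm{id}$ and $\Psi\Phi\cong\mathrm{id}$. On objects, $\Phi\Psi(\rho)(v)=Mv=\rho(v)$ with $\rho(e)$ recovered as right multiplication by $e$; and $\Psi\Phi(M)=\bigoplus Mv$ which equals $M$ via $m=\sum mv$, using that $\{v\}_{v\in V}$ are orthogonal idempotents whose sum acts as the identity on any element of a unital module (even when $V$ is infinite, the sum is finite on any given $m$ because $m=\sum m_{i}\lambda_{i}$ involves only finitely many vertices in its paths). These identifications are easily checked to be natural in $M$ and $\rho$.

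The main obstacle, and the place condition $(I)$ really earns its keep, is defining the action of $E^{*}$ on $M=\bigoplus\rho(v)$ and verifying $(\mathrm{CK}1)$ and $(\mathrm{CK}2)$ simultaneously: without the isomorphism in $(I)$ there is no canonical choice for how $e^{*}$ should act on $\rho(te)$, and only the inverse of the direct-sum map makes both Cuntz--Krieger relations hold at once. Row-finiteness is needed precisely so that this direct sum is finite at every nonsink and the isomorphism in $(I)$ makes sense in the abelian category of vector spaces; infinite emitters would force us to replace direct sums with products and break the equivalence.
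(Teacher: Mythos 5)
Your construction is correct and follows exactly the route the paper sketches in Section 2.2 (and defers to \cite[Theorem 3.2]{ko1}): one direction by applying $\mathrm{Hom}_L(\underline{\;\;},M)$ to the isomorphisms $\phi_v$, the converse by letting $e^*$ act through the inverse of the direct-sum map supplied by $(I)$, which is precisely what makes (CK1) and (CK2) hold. Nothing is missing; your remarks on unitality for infinite $V$ and on where row-finiteness enters are accurate.
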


 With the quiver representation viewpoint there is no need to  mention the generators $\{e^* : e \in E \}$ explicitly, they are implicit in the condition (I). Below we will frequently define modules by constructing the corresponding quiver representations.\\

A \textbf{dimension function} on a digraph $\Gamma$ is a function $d: V  \longrightarrow \mathbb{N}$  satisfying: $d(v) =\sum_{se=v} d(te)$ for all $ v \in V $ with $ 0< \vert s^{-1}(v) \vert < \infty$. Hence dimension functions on $\Gamma$ correspond exactly to monoid homomorphisms from the graph monoid $S(\Gamma)\cong \mathcal{V} (L)$ to the additive monoid of natural numbers $\mathbb{N}$.\\

$M \in \mathfrak{M}_L$ is of \textit{finite type} if $dim^{\mathbb{F}} (Mv)< \infty$ for all $v \in V$. Then $d(v):=dim^{\mathbb{F}}(Mv)$ is a dimension function by condition (I). When $V$ is finite, finite type is the same as finite dimensional since $M =\oplus_{v \in V}  Mv$ by Theorem \ref{teorem}. Theorem \ref{teorem} also implies that any dimension function can in fact be realized by an $L$-module \cite[Corollary 3.7]{ko1}.
%Ekstra bilgi Cohn path alg. ile ilgili
%The Cohn path algebra $C_{\mathbb{F}}(\Gamma,\Pi)$ can be realized as the reduced algebra $\mathbb{F}S/\mathbb{F} \infty$ of a semigroup $S$ with $\infty$ (i.e., $\infty s=\infty =s \infty$ for all $s$ in $S$) because the relations (V), ($\tilde{E}$) and (SCK1) do not involve any linear combinations: $S$ is generated by $V\cup E\cup E^* \cup \left\{ \infty \right\}$ subject to the modified (by replacing 0 with $\infty$, e.g., $vw=\infty$ if $v \neq w$ in $V$) versions of the relations (V), ($\tilde{E}$) and (SCK1). This enables us (in fact, it is the same as) finding an $\mathbb{F}$-basis $\beta =S- \left\{ \infty \right\}$ with the property that the product of two basis elements is  0 or another basis element. \\
\medskip

\subsection{Support Subgraph}
\medskip
For any $M \in \mathfrak{M}_L$ the \textbf{support subgraph} of $M$, denoted by $\Gamma_{_M}$, is the induced subgraph of $\Gamma$ on $V_{_M}:=\{v \in V \mid Mv \neq 0 \}$. It's easy to check that $V\setminus V_{_M}= \{ v \in V \>  \vert \> Mv=0 \}$  is hereditary and saturated, hence the ideal $I_{_M}$ generated by $V\setminus V_{_M}$ is the largest graded ideal of $L(\Gamma)$ contained in $AnnM$. Conversely,  $V\setminus H$  for any hereditary saturated subset $H$ of $V$ is the support subgraph of $L(\Gamma) / (H)$ regarded as an $L(\Gamma)$-module. Also $L(\Gamma_{_M})\cong L(\Gamma)/I_{_M}$ and we may regard $M$ as an $L(\Gamma_{_M})$-module whose $L(\Gamma)$-module structure is induced by the projection from $L(\Gamma)$ to $L(\Gamma_{_M})$. As an $L(\Gamma_M)$-module, $M$ has full support, that is, $Mv \neq 0$ for all $v \in V_M$ \cite[Section 5]{ko1}.

\begin{lemma} \label{exits} \cite[Lemma 6.1]{ko1} 
If  $M\in \mathfrak{M}_{L(\Gamma)}$ is of finite type then the cycles of its support subgraph $\Gamma_{_M}$ have no exits. 
\end{lemma}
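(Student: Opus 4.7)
My plan is to argue by contradiction using a simple counting argument on the $\mathbb{F}$-dimensions $d_i := \dim_{\mathbb{F}} Mv_i$ at the successive vertices $v_i := se_i$ of a cycle $C = e_1 e_2 \cdots e_n$ of $\Gamma_{_M}$ that supposedly carries an exit $f$ (so $sf = v_j$ and $f \neq e_j$ for some $j$), invoking condition $(I)$ of Theorem \ref{teorem}.

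Regarding $M$ as an $L(\Gamma_{_M})$-module, the full-support property on $\Gamma_{_M}$ and the finite-type hypothesis together guarantee that each $d_i$ is a strictly positive (finite) integer. Applying $(I)$ at $v_i$ in $\Gamma_{_M}$ gives $Mv_i \cong \bigoplus_{se = v_i} Mte$, hence $d_i = d_{i+1} + \sum_{e \neq e_i,\, se = v_i} \dim_{\mathbb{F}} Mte \geq d_{i+1}$ (indices taken modulo $n$, so that $v_{n+1} = v_1$). At the distinguished vertex $v_j$ the sum contains the extra term $\dim_{\mathbb{F}} Mtf \geq 1$ (nonzero because $tf \in V_{_M}$), forcing the strict inequality $d_j > d_{j+1}$. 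Chaining these around the cycle yields $d_1 \geq d_2 \geq \cdots > d_{j+1} \geq \cdots \geq d_{n+1} = d_1$, a contradiction.

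The main (and essentially only) subtlety is the choice of ambient graph when invoking $(I)$: an arrow $f$ of $\Gamma$ with $sf = v_j$ but $tf \notin V_{_M}$ contributes $Mtf = 0$ and therefore fails to supply the strict inequality needed above. This is why the lemma is naturally read as ``no exit inside $\Gamma_{_M}$,'' and passing to the $L(\Gamma_{_M})$-module structure (so that $M$ has full support and $(I)$ only sees arrows of $\Gamma_{_M}$) disposes of this bookkeeping cleanly. Beyond this, the argument is just pigeonhole around the cycle.
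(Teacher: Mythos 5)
Your argument is correct and is essentially the paper's own proof: both chain the inequalities $\dim Mv_i \ge \dim Mv_{i+1}$ around the cycle via condition $(I)$ of Theorem \ref{teorem} and conclude that every non-cycle arrow leaving a cycle vertex must have target outside the support. The only difference is presentational (you phrase it as a contradiction with a strict inequality at the exit, the paper deduces equality everywhere and then reads off that the extra summands vanish), and your remark about exits in $\Gamma$ versus exits in $\Gamma_{_M}$ correctly identifies the one point of bookkeeping.
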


\begin{proof}
If $v_1,..., v_n, v_{n+1}=v_1$ are consecutive vertices in a cycle of $\Gamma_{_M}$ then $dim(Mv_1) \geq dim(Mv_2) \geq \cdots \geq dim(Mv_n)\geq dim( Mv_1)$ by Theorem \ref{teorem}. Hence $dim(Mv_k) =dim(Mv_{k+1})$ for $k=1, \cdots, n$. It follows from Thoerem \ref{teorem} again that $Mte=0$ for $e\in s^{-1}(v_k)$
unless $te=v_{k+1}$. Thus the cycles of $\Gamma_{_M}$ have no exits. 
\end{proof}\\

 For any two vertices $v$ and $w$ in $V$ we write $v \leadsto w$ if there is a path $p$ in $\Gamma$ from $v$ to $w$. This defines a preorder (a reflexive and transitive relation). If $v$ and $w$ are on a cycle then $v \leadsto w$ and $w \leadsto v$. Let $U$ be the set of sinks and cycles of $\Gamma$. There is an induced preorder on $U$, also denoted by $\leadsto$. (This is a partial order on $U$ if and only if the cycles of $\Gamma$ are disjoint, equivalently when $L(\Gamma)$ has polynomial growth \cite[Theorem 5]{aajz12}.) A sink or a cycle $u \in U$ is \textbf{maximal} if $u' \leadsto u$ only if $u' =u$. \\

 The \textbf{predecessors} of $v$ in $V$ is $V_{\leadsto v} := \{ w \in V \mid w \leadsto v\} $. If $u$ and $w$ are two vertices on a cycle $C$ then they have the same predecessors, so $V_{\leadsto C} $ is well-defined. The \textbf{ predecessors subgraph}  $\Gamma_{\leadsto v}$ is the induced subgraph on $V_{\leadsto v}$, similarly $\Gamma_{\leadsto C} := \Gamma_{\leadsto v}$ where $v$ is any vertex on $C$.

\section{The Reduction Algorithm}
This section is about the consequences of a geometric (graph theoretic)  process we call the \textbf{reduction algorithm} defined on a row-finite digraph $\Gamma=(V,E)$: For a loopless nonsink $v \in V$, we replace each path $fg$ of length 2 such that $tf=v=sg$ with an arrow labeled $fg$ from $sf$ to $tg$ and delete $v$ and all arrows touching $v$. (Note that $fg$ denotes a path in $\Gamma$, but an arrow in its reduction.) In particular, if $v$ is a source but not a sink, then we delete $v$ and all arrows starting at $v$ without adding any new arrows. We may repeat this  as long as there is a loopless non-sink. Any digraph obtained during this process is called a \textbf{reduction} of $\Gamma$. 
If $\Gamma$ is finite then after finitely many steps we will reach a \textbf{complete reduction} of $\Gamma$,  which has no loopless nonsinks. A digraph in which every vertex is either a sink or has a loop is called \textbf{reduced}.\\

In the example below, $\Gamma_1$ and $\Gamma_2$ are reductions of the digraph $\Gamma_0$. The number of arrows from one vertex to another is indicated by the number above the arrow (so there are 3 arrows from $w$ to $x$). There are two reduction steps going from $\Gamma_1$ to $\Gamma_2$ and $\Gamma_2$ is a complete  reduction of $\Gamma_0$.

\begin{example}

$$
\Gamma_0 \quad \quad  \xymatrix{ & {\bullet}^{w} \ar@/^1pc/[r]^{3}  & {\bullet}^{x} \ar@/^1pc/[l]^{2} \\ {\bullet}^{u} \ar@{->}[ur] \ar@{->}[dr]^{2} & \\
                 & \bullet^{v} \ar@{->}[r]  & \bullet^{y}  }$$

\bigskip

 $$\Gamma_1 \quad \xymatrix{ & {\bullet}^{w}  \ar@/^1pc/[r]^{3}  & {\bullet}^{x} \ar@/^1pc/[l]^{2} \\ 
                 & \bullet^{v} \ar@{->}[r]  & \bullet^{y}  }$$

\bigskip

$$ \Gamma_2 \quad \xymatrix{ & {\bullet}^{w} \ar@(ur,dr)^{6} &\\ 
                 &  \bullet^{y}  }$$

\end{example}

\bigskip

\begin{remark}
The reduction algorithm can be described in terms of  the "six graph moves" originating in symbolic dynamics \cite{alps11}, \cite[Appendix 3]{abr15}: When the vertex $v$ to be eliminated is not a source, we perform an "in-split" where $t^{-1}(v)$ is partitioned so that each part consists of a single arrow. Then we perform  a "contraction" on each one of these arrows. Under the additional hypothesis of $L(\Gamma)$ being purely infinite simple all six graph moves yield Morita equivalences \cite{alps11}, \cite[Proposition 10]{abr15}. In Theorem \ref{Morita} below we prove that the reduction algorithm yields a Morita equivalence without any hypothesis on $L(\Gamma)$. \\

When $v$ is a source, reduction is exactly the same as "source elimination" which is known to give a Morita equivalence \cite[Proposition 10]{abr15}. Source elimination can also be obtained as a composition of an in-split partitioning all arrows $s^{-1}(v)$ in the singletons followed by contractions of all these arrows. 

\end{remark}

If $\Gamma'$ is a one step reduction eliminating the vertex $v$ of $\Gamma$ then there is a monomorphism $ \iota$ from $L(\Gamma')$ to $L(\Gamma)$ defined as:  $\iota(w)=w$ for all $w \in V'$, $\iota (e)=e $ and $\iota (e^*)= e^*$ if $e \in E' \cap E$, finally $\iota (fg)=fg \> , \> \iota( (fg)^*)=g^*f^*$ if $fg$ is a new arrow. (Again the input $fg$ is an arrow of $\Gamma'$ while the output $fg$ is a path of length 2 in $\Gamma$.) This defines a homomorphism since the relations are satisfied. In fact $\iota$ is a graded homomorphism where $L(\Gamma')$ has the standard $\mathbb{Z}$-grading ($deg(w)=0 $ for all $w \in V' \> ,\>  deg(e)=1 \> , \> deg(e^*)=-1$ for all $e \in E'$) but the grading in $L(\Gamma)$ is defined as $deg(w)=0 $ for all $w \in V$, $deg(e)=1, deg(e^*)=-1$ if $se\neq v$ and $deg(g)=deg(g^*)=0$ if $sg=v$. A graded ideal is generated by the vertices it contains (as a consequence of the one-to-one correspondence between graded ideals and hereditary, saturated subsets of vertices). Hence the graded ideal $Ker (\iota)$ is trivial since it contains no vertices, thus $\iota$ is one-to-one. \\
%Kernel trivial demek ={0} demek 
%Aciklama: Bir graded I ideal alininca onu kose kumesi ile kesisimi yani $I\cap V$ nin urettigi ideal yine I yi verir; $I=(I\cap V)$ ama burda kernelda hic kose yok, dolayisiyla bu graded ideal yani Kernel sadece {0} dan ibarettir, dolayisiyla 1-1 dir.\\

We may identify $L(\Gamma')$ with $Im(\iota)=L':=\{ \alpha \in L(\Gamma) \mid  v\alpha=0=\alpha v \} \> $:  Clearly $Im(\iota) \subseteq L'$. Conversely, $L'$ is spanned by elements of the form $pq^*$ where $p$ and $q$ are paths in $\Gamma$ with $tp=tq$ and $sp\neq v \neq sq $. If $tp=v$ then we can use $v=\sum_{g \in s^{-1}(v)} gg^*$ to express $pq^*=pvq^*$ as an element of $Im(\iota)$ thus $L'=Im(\iota)$.\\

If $\Gamma'$ is a reduction of $\Gamma$ then $\Gamma$ and $\Gamma'$ have the same set of sinks. A cycle may get shorter under reduction but it can not disappear. However, the number of cycles may increase as illustrated in the following example where the number of cycles increases from 2 to 3:

$$\Gamma :  \xymatrix{& {\bullet}^{u} \ar@/^1pc/[r] &\ar [l]  {\bullet}^{v}  \ar [r]  & {\bullet}^{w} \ar@/^1pc/[l] }$$\\

$$\Gamma' :  \qquad  \xymatrix{ & {\bullet}^{u} \ar@(ul,dl)\ar@/^1pc/[r] & {\bullet}^{w} \ar@/^1pc/[l] \ar@(ur,dr)} \qquad $$\\

The digraph $\Gamma$ above also shows that complete reductions are not unique. If we chose to eliminate the vertices $u$ and $w$ (instead of $v$ as above) we obtain the rose with 2 petals which is not isomorphic to the $\Gamma'$ above. \\

However, if the cycles of a finite digraph $\Gamma$ are disjoint (that is, the Gelfand-Kirillov dimension of $L(\Gamma)$ is finite \cite[Theorem 5]{aajz12}) then the number of cycles does not change under reduction since each eliminated vertex effects at most one cycle. In fact, all complete reductions of $\Gamma$ are isomorphic when the cycles of $\Gamma$ are disjoint.

%During the reduction process the sinks of $\Gamma$ remain unchanged and no new sinks are created. 

\begin{proposition} 
If $\Gamma$ is a finite digraph then $L(\Gamma)$ is finite dimensional if and only if any complete reduction of $\Gamma$ consists only of isolated vertices. In this case $L(\Gamma)$ is isomorphic to a direct sum of matrix algebras and the number of summands equals the number of (isolated) vertices of a complete reduction of $\Gamma$.
\end{proposition}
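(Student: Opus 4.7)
The plan is to split the claim into a graph-theoretic part and an algebraic part.

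First I would show that a complete reduction of $\Gamma$ consists only of isolated vertices iff $\Gamma$ is acyclic, and that the isolated vertices then correspond to the sinks of $\Gamma$. The reduction algorithm eliminates only loopless non-sinks, so sinks persist through every step; moreover, each new arrow added by reduction corresponds to a genuine length-two path in $\Gamma$, so reduction neither creates new cycles nor destroys existing ones but only shortens them. Hence a cycle in $\Gamma$ becomes a loop in every complete reduction, and conversely a loop witnesses a cycle in $\Gamma$. When $\Gamma$ is acyclic every non-sink is loopless, the algorithm removes them all, and no arrow between the remaining sinks can survive (sinks cannot be sources), leaving isolated vertices in natural bijection with $\mathrm{Sinks}(\Gamma)$.

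For the algebraic half I handle dimension first: if $\Gamma$ contains a cycle $C$ then the powers $\{C^n\}_{n \geq 1} \subseteq \mathbb{F}\Gamma \hookrightarrow L(\Gamma)$ are linearly independent (using \cite[Lemma 1.6]{goo09}), so $L(\Gamma)$ is infinite dimensional. Conversely, for $\Gamma$ finite and acyclic I would iterate the isomorphism $\phi_v : vL(\Gamma) \xrightarrow{\sim} \bigoplus_{se=v} teL(\Gamma)$ at each non-sink endpoint; termination is guaranteed since path lengths in a finite acyclic digraph are bounded. This produces $vL(\Gamma) \cong \bigoplus_{w \in \mathrm{Sinks}(\Gamma)} (wL(\Gamma))^{n_{v,w}}$ where $n_{v,w}$ is the number of paths from $v$ to $w$, and summing over $v$ gives
$$L(\Gamma) \;=\; \bigoplus_{v \in V} vL(\Gamma) \;\cong\; \bigoplus_{w \in \mathrm{Sinks}(\Gamma)} (wL(\Gamma))^{|P_w|}$$
as right $L(\Gamma)$-modules, where $P_w$ is the finite set of paths ending at $w$.

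To extract the algebra structure I would compute $\mathrm{Hom}_{L(\Gamma)}(wL, w'L) \cong w'Lw$ via the standard isomorphism. This vanishes for distinct sinks $w \neq w'$ since the only path starting at a sink is the trivial one, while $\mathrm{End}_{L(\Gamma)}(wL) \cong wLw = \mathbb{F}w \cong \mathbb{F}$ for every sink $w$. Applying $\mathrm{End}_{L(\Gamma)}$ to the decomposition above (using $\mathrm{End}_{L(\Gamma)}(L(\Gamma)) \cong L(\Gamma)$) therefore yields
$$L(\Gamma) \;\cong\; \bigoplus_{w \in \mathrm{Sinks}(\Gamma)} M_{|P_w|}(\mathbb{F}),$$
a finite direct sum of matrix algebras over $\mathbb{F}$ whose number of summands equals $|\mathrm{Sinks}(\Gamma)|$, and hence the number of isolated vertices in any complete reduction of $\Gamma$. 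The main obstacle is the termination of the $\phi_v$ iteration and the vanishing of the cross Hom-spaces, both of which rest on the observation that in a finite acyclic $\Gamma$ every path has bounded length and the only path starting at a sink is trivial.
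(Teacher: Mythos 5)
Your argument is correct, and it differs from the paper's in an interesting way. The graph-theoretic half (complete reduction is all isolated vertices iff $\Gamma$ is acyclic, with isolated vertices in bijection with the sinks) is essentially the same as the paper's: a closed path in a reduction comes from a closed path in $\Gamma$, and a cycle can be shortened by reduction but never eliminated. One small overstatement there: a cycle of $\Gamma$ need not become a \emph{loop} in a complete reduction (a reduced digraph can contain a $2$-cycle between two vertices each carrying a loop — see the digraph $\Gamma'$ in Section 3 of the paper); but all you actually use is that a cycle forces the reduction to have at least one arrow, which is true. The real divergence is in the algebraic half: the paper simply cites Abrams--Aranda Pino--Siles Molina \cite[Corollaries 3.6 and 3.7]{aas07} for the fact that $L(\Gamma)$ is finite dimensional iff $\Gamma$ is finite and acyclic, together with the matrix-algebra decomposition indexed by sinks, whereas you reprove both facts from scratch. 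Your route — linear independence of $\{C^n\}$ via the embedding $\mathbb{F}\Gamma\hookrightarrow L(\Gamma)$ for infinite-dimensionality, iterated use of the isomorphisms $\phi_v$ (terminating because path lengths in a finite acyclic digraph are bounded) to get $L\cong\bigoplus_w (wL)^{|P_w|}$, and then the endomorphism-ring computation with $\mathrm{Hom}_L(wL,w'L)\cong w'Lw$ vanishing for distinct sinks and $wLw\cong\mathbb{F}$ — is sound and self-contained, and it buys an explicit identification of the matrix sizes as the number of paths into each sink, which the paper's citation also provides but which your proof derives directly from the machinery (the $\phi_v$ and the $pq^*$ spanning set) already set up in Section 2. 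The cost is length; the paper's proof is a one-line reduction to known results plus the purely combinatorial observation about reductions.
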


\begin{proof} 
$L(\Gamma)$ is  finite dimensional if and only if $\Gamma $ is finite and has no directed cycles, moreover $L(\Gamma)$ is isomorphic to a direct sum of matrix algebras with as many summands as the number of sinks of $\Gamma$  \cite[Corollaries 3.6 and 3.7]{aas07}. A closed path in a reduction of 
$\Gamma$ comes from a closed path in $\Gamma$. Hence any complete reduction of a finite acyclic $\Gamma$ will only have isolated vertices, corresponding to the sinks of $\Gamma$. Conversely, if any complete reduction of $\Gamma$ consists of isolated vertices then $\Gamma$ could not have had any cycles because a reduction may shorten a cycle, but it can not get rid of it.
\end{proof} \\

Reduction preserves the monoid of $\Gamma$ enabling us to classify all dimension functions.

\begin{lemma}
		If $\Gamma^{'}$ is a reduction of the digraph $\Gamma$, then
		 $S(\Gamma) \cong S(\Gamma^{'}) $.
	\end{lemma}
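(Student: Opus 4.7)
The plan is as follows. Since a general reduction is a finite sequence of one-step reductions and monoid isomorphism is transitive, it suffices to treat the case where $\Gamma'$ is obtained from $\Gamma$ by eliminating a single loopless non-sink $v \in V$. I would then construct mutually inverse monoid homomorphisms $\phi: S(\Gamma) \to S(\Gamma')$ and $\psi: S(\Gamma') \to S(\Gamma)$ directly on generators.

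Define $\phi$ by $\phi(w) = w$ for $w \in V \setminus \{v\}$ and $\phi(v) = \sum_{g \in s^{-1}(v)} tg$; this is a finite sum of elements of $V' = V \setminus \{v\}$ because $v$ is loopless and $\Gamma$ is row-finite. Define $\psi$ to be the inclusion $\psi(w) = w$ for $w \in V'$. The main content is to check that both maps respect the defining relations of the two monoids. The key observation is that for any non-sink $w \neq v$ of $\Gamma$, the out-arrows of $w$ in $\Gamma'$ decompose as the disjoint union of the old arrows $e \in s^{-1}_\Gamma(w)$ with $te \neq v$ together with the new arrows $fg$, one for each pair $(f,g)$ with $sf = w$, $tf = v$, $sg = v$, and with $t'(fg) = tg$.

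Splitting the relation $w = \sum_{e \in s^{-1}_\Gamma(w)} te$ in $S(\Gamma)$ according to whether $te = v$ and substituting $\phi(v) = \sum_{g \in s^{-1}(v)} tg$ produces precisely the relation at $w$ in $S(\Gamma')$, so $\phi$ is well defined; the relation at $v$ itself maps to a tautology. Conversely, the relation at $w$ in $\Gamma'$, read inside $S(\Gamma)$ via $\psi$, is obtained from the relation at $w$ in $\Gamma$ by replacing each summand $v$ on the right with $\sum_{g \in s^{-1}(v)} tg$, which is allowed by the relation at $v$ in $S(\Gamma)$; hence $\psi$ is well defined. Finally, $\psi \circ \phi$ and $\phi \circ \psi$ are identities on generators: for $w \neq v$ both fix $w$, and $\psi\phi(v) = \sum_{g \in s^{-1}(v)} tg = v$ in $S(\Gamma)$ is exactly the relation at $v$.

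The main (and essentially only) obstacle is the combinatorial bookkeeping: correctly enumerating $s^{-1}_{\Gamma'}(w)$ in terms of $s^{-1}_\Gamma(w)$ and $s^{-1}_\Gamma(v)$, and confirming that row-finiteness of $\Gamma$ forces row-finiteness of $\Gamma'$, so that the defining relation at a non-sink $w$ is imposed in $S(\Gamma')$ whenever the corresponding relation is imposed in $S(\Gamma)$.
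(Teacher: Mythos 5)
Your proposal is correct and follows essentially the same route as the paper's proof: reduce to a one-step reduction, define the inclusion $S(\Gamma')\to S(\Gamma)$ and the map sending $v\mapsto \sum_{sg=v}tg$ in the other direction, and verify that the relation at each non-sink $w\neq v$ splits according to whether $te=v$. The only cosmetic difference is that you name the maps in the opposite order from the paper's $\Phi$ and $\Psi$.
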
	
	
	\begin {proof}	
	 It suffices to show that $S(\Gamma) \cong S(\Gamma^{'}) $ when $\Gamma^{'}$ is a one step reduction of 
		$\Gamma$. If $v$ is the deleted loopless nonsink, then $V^{'} =V \setminus \left\{v \right\} $ is the vertex set of $\Gamma^{'}$. Let $\Phi :S(\Gamma^{'}) \longrightarrow S(\Gamma) $ be induced by the inclusion of $V^{'}$ into $V$. In $S(\Gamma^{'})$
				$$ u=\sum_{se=u , ~ te \neq v} t(e) + \sum_{se=u, ~te=v} \left( \sum_{sf=v} tf \right)  \quad  \forall \> u \in V'$$
		by the definition of $\Gamma^{'}$. (Note that $sf=v$ implies $tf \neq v$ since $v$ is loopless.) Since, in $S(\Gamma)$ we have  $ u=\sum_{se=u} te$ and $v=\sum_{sf=v} tf$, $\Phi$ is a well-defined semigroup homomorphism. Now, $\Psi (u) =u $ if $u\neq v$ and $\Psi (v) =\sum_{sf=v} tf$ similarly defines $\Psi: S(\Gamma) \longrightarrow S(\Gamma^{'})$, which is the inverse of $\Phi$. 
				\end{proof}
\bigskip

%\begin{definition} 

 % {\rm A digraph $\Gamma$ is called a {\em reduced digraph} if there exists a loop at each nonsink.}
%\end{definition}

%\begin{definition} 

 %{\rm  A vertex $v$ receiving exactly one arrow which is a loop is called a {\em pseudo-source}. }
%	\end{definition}

 A sink $w$ becomes an isolated vertex in a complete reduction of finite $\Gamma$ if and only if $w$ is maximal (there are no cycles in $\Gamma$ from which there is a path to $w$). A cycle is maximal in $\Gamma$ if and only if it becomes a loop at a pseudo-source in a complete reduction. \\

\begin{lemma} \label{nonzero}
If $\Gamma$ is a reduced digraph and $ d:V \rightarrow \mathbb{N}$ is a dimension function then vertices with $d(v)>0 $ are either isolated vertices or  pseudo-sources.

\end{lemma}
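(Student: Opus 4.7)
The plan is to prove the contrapositive: if $v$ is neither isolated nor a pseudo-source, then $d(v)=0$. Since $\Gamma$ is reduced, every vertex is either a sink or carries a loop, so I would split into two cases based on which side of this dichotomy $v$ lies.

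First, suppose $v$ is a sink that is not isolated. Then $t^{-1}(v)\neq \emptyset$, so pick an arrow $e$ with $te=v$, and set $u:=se$. Since $v$ is a sink, $u\neq v$. Because $\Gamma$ is reduced, $u$ is either a sink or carries a loop; it cannot be a sink as $e\in s^{-1}(u)$, so $u$ carries a loop $\ell_u$. Now $\ell_u$ and $e$ are distinct arrows out of $u$ (they target $u$ and $v$ respectively), so the dimension function equation at $u$ yields
$$d(u)=\sum_{sf=u}d(tf)\ \geq\ d(t\ell_u)+d(te)\ =\ d(u)+d(v),$$
forcing $d(v)=0$.

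Next, suppose $v$ carries a loop $\ell$ but is not a pseudo-source, so there exists $e\in t^{-1}(v)$ with $e\neq \ell$. Set $u:=se$. If $u\neq v$, the same argument as above applies (find a loop $\ell_u$ at $u$, distinct from $e$ since their targets differ, and apply the dimension equation at $u$). If instead $u=v$, then $e$ is a second loop at $v$ distinct from $\ell$, and the dimension equation at $v$ gives
$$d(v)=\sum_{sf=v}d(tf)\ \geq\ d(t\ell)+d(te)\ =\ 2d(v),$$
again forcing $d(v)=0$.

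The only subtle point is making sure, in each case, that $s^{-1}(u)$ really contains two distinct arrows whose targets we can bound from below — namely the loop at $u$ and the arrow $e$ into $v$. Once this is handled by the case analysis $u=v$ versus $u\neq v$, the proof reduces to a one-line application of the defining relation for a dimension function. Row-finiteness of $\Gamma$ guarantees that $|s^{-1}(u)|$ is finite (and positive, since $u$ carries a loop), so the relation $d(u)=\sum_{sf=u}d(tf)$ is legitimately invoked.
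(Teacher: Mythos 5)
Your proof is correct and follows essentially the same route as the paper: both arguments show $d(v)=0$ for any $v$ that is neither isolated nor a pseudo-source, via the inequality $d(v)\geq 2d(v)$ when $v$ carries two loops and $d(u)\geq d(u)+d(v)$ when some non-sink $u$ (which must carry a loop since $\Gamma$ is reduced) has an arrow into $v$. The only difference is cosmetic — you organize the cases by the sink/loop dichotomy first, while the paper splits directly into "two loops at $v$" versus "an incoming arrow from another vertex."
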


\begin{proof} 
Let $T$ be the set of isolated vertices and pseudo-sources of $\Gamma$. If $v \notin T$ then there are at least two loops at $v$ or there is an arrow $e$ such that $te=v$. \\

If there are two loops $f, g$ at $v$ then $d(v) = \sum_{s(e)=v} d(te) \geq 2d(v)$. 
%$d(v) = \sum_{e\in s^{-1} (v)} d(te)=2d(v)+ \sum_{e\in s^{-1} (v)- \left\{f, g \right\}} d(te) $.
Hence we get $d(v)=0$. \\

If there is an arrow $e$ such that $te=v$ then $se$ is not a sink. Hence there exists a loop $h$ at $se$ since $\Gamma$ is a reduced digraph. Then $d(v)=0$  because $d(se)\geq d(se)+ d(te)= d(se)+d(v)$. 
%Then $d(v)=0$  because $d(u)=d(u)+ d(te)+ n= d(u)+d(v)+n$ and $n \in \mathbb{N}$. 
\end{proof}

\begin{lemma} \label{psource}
Let $\Gamma$ be a reduced digraph. A dimension function is obtained by assigning arbitrary values to the isolated vertices and the pseudo-sources of $\Gamma$. Hence the monoid of homomorphisms from $S(\Gamma )$ to $\mathbb{N}$ is isomorphic to the  free commutative monoid generated by the isolated vertices and the pseudo-sources of $\Gamma$.

 %i.e. kaynaksı ve izole köşeler tarafından üretilen  serbest değişmeli monoide izomorftur.

\end{lemma}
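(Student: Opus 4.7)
The plan is to combine Lemma \ref{nonzero} with a direct extension argument. Let $T$ denote the set consisting of isolated vertices and pseudo-sources of $\Gamma$. I will show that the restriction map $d \mapsto d|_T$, from the monoid of dimension functions on $\Gamma$ to the monoid of $\mathbb{N}$-valued functions on $T$ under pointwise addition, is a monoid isomorphism, and then invoke the identification of dimension functions with monoid homomorphisms $S(\Gamma) \to \mathbb{N}$ recorded after the definition of dimension function.

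First I would invoke Lemma \ref{nonzero} to conclude that any dimension function $d$ on the reduced $\Gamma$ satisfies $d(v)=0$ whenever $v \notin T$. Consequently, the restriction map is additive and injective: $d$ is recovered from $d|_T$ by extending by zero. Hence everything reduces to surjectivity, namely showing that for any prescribed $d_0 : T \to \mathbb{N}$, the extension by zero produces a valid dimension function, i.e.\ satisfies $d(v) = \sum_{se=v} d(te)$ at each non-sink $v$ (where $|s^{-1}(v)| < \infty$ is automatic under row-finiteness).

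The verification splits into two cases. If $v$ is a pseudo-source with (unique) incoming loop $\ell$, then $\ell$ is also an outgoing arrow contributing $d(t\ell) = d(v)$; any other outgoing arrow $e$ must satisfy $te \neq v$ (otherwise $e$ would be a second arrow ending at $v$), and I claim $te \notin T$, so $d(te) = 0$. Indeed $te$ cannot be isolated because $e$ ends at it, and $te$ cannot be a pseudo-source because $t^{-1}(te)$ contains both $e$ and the loop at $te$ (which exists because $te$ is not a sink — a sink $te$ would have $e \in t^{-1}(te)$ forcing it to be non-isolated, hence outside $T$ anyway). If instead $v$ is not a pseudo-source, then $d(v)=0$ by construction, and every outgoing $e$ has either $te = v$ (contributing $0$) or $te \neq v$, in which case the same argument forces $te \notin T$ and $d(te) = 0$.

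This establishes the monoid isomorphism between dimension functions on $\Gamma$ and $\mathbb{N}^T$, which is the free commutative monoid on $T$, completing the proof. The main obstacle is not the case analysis itself but the geometric observation inside it: the target of any outgoing arrow emanating from a $T$-vertex, or from a non-pseudo-source vertex with a loop, is never itself in $T$. This is precisely what guarantees that extension by zero is compatible with the dimension function relations, and it is the place where the reducedness hypothesis on $\Gamma$ is actually used.
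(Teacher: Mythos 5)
Your proof is correct and follows essentially the same route as the paper's: Lemma \ref{nonzero} gives that every dimension function vanishes off the set $T$ of isolated vertices and pseudo-sources, and the same two-case verification (pseudo-source versus the rest) shows that extension by zero of an arbitrary assignment on $T$ satisfies the defining relations. The only difference is that you spell out the geometric observation the paper asserts without justification, namely that the target of a non-loop arrow can be neither isolated nor a pseudo-source.
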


\begin{proof} 
We will check that any assignment of arbitrary natural numbers to pseudo-sources and isolated vertices will define a dimension function (where $d(v):=0$ for all the remaining vertices). There are no relations to check for sinks (including isolated vertices). If $v$ is a pseudo-source then there is a unique loop at $v$ and for any other arrow $e$ with $se=v$, the vertex $te$ is neither a pseudo-source nor an isolated vertex. Thus $d(te)=0$ by definition. Hence the relation at $v$ is $d(v)=d(v)+0$, which holds for any choice of $d(v)$. \\

If $v$ is neither a sink nor a pseudo-source then $d(v)=0$ by definition and for any $e$ with $se=v$ its target $te$ is neither isolated nor a pseudo-source. Hence the relation at $v$ is $0=0$. Therefore any choice of natural numbers for pseudo-sources and isolated vertices (with the remaining vertices of the reduced digraph sent to 0) yields a homomorphism from $S(\Gamma)$ to $\mathbb{N}$. This gives an isomorphism  from the free commutative monoid on the isolated vertices and the pseudo-sources of $\Gamma$ to the monoid of homomorphisms from $S(\Gamma)$ to $\mathbb{N}$ under addition.
\end{proof} \\

\begin{theorem} \label{27}
Let $\Gamma$ be a finite digraph. A dimension function is obtained by assigning arbitrary natural numbers to the maximal sinks and the maximal cycles of $\Gamma$. Hence the monoid of homomorphisms from $S(\Gamma) \cong \mathcal{V} (L(\Gamma))$ to $\mathbb{N}$ is isomorphic to the free commutative monoid generated by the maximal sinks and the maximal cycles of $\Gamma$.  
\end{theorem}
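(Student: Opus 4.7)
The plan is to pass to a complete reduction of $\Gamma$ and combine the two preceding lemmas. Since $\Gamma$ is finite, iterating the reduction algorithm terminates at a reduced digraph $\Gamma'$. Repeated application of the monoid-preservation lemma gives $S(\Gamma) \cong S(\Gamma')$, so the monoid of dimension functions on $\Gamma$ is isomorphic to the monoid of dimension functions on $\Gamma'$. Because $\Gamma'$ is reduced, Lemma \ref{psource} identifies the latter with the free commutative monoid on the set of isolated vertices and pseudo-sources of $\Gamma'$.

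Next I would identify these generators with features of the original digraph $\Gamma$. The reduction algorithm only eliminates loopless non-sinks, so the sinks of $\Gamma$ and $\Gamma'$ coincide; a sink becomes isolated in $\Gamma'$ precisely when every vertex that once led to it has been eliminated, i.e., when no cycle of $\Gamma$ connects to it. This is exactly the maximality condition for sinks, as already noted in the paragraph preceding Lemma \ref{nonzero}. For the pseudo-sources, each cycle of $\Gamma$ survives as a (possibly shorter) cycle of $\Gamma'$, and it becomes a loop at a pseudo-source if and only if in $\Gamma$ no other cycle connected to it --- again noted just before Lemma \ref{nonzero}. This gives the two bijections maximal sinks of $\Gamma$ $\leftrightarrow$ isolated vertices of $\Gamma'$ and maximal cycles of $\Gamma$ $\leftrightarrow$ pseudo-sources of $\Gamma'$.

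The main point to be careful about is that complete reductions of $\Gamma$ are not unique and the number of (non-maximal) cycles can even grow under reduction; the content of the step above is that despite this, the counts of pseudo-sources and of isolated vertices in any complete reduction are intrinsic to $\Gamma$, being pinned down by the maximality condition in $\Gamma$ itself. Combining this with the identifications of the first paragraph yields the claimed isomorphism. Explicitly, given an arbitrary assignment $d_0$ of natural numbers to the maximal sinks and maximal cycles of $\Gamma$, the corresponding dimension function on $\Gamma'$ has value $d_0(u)$ at the isolated vertex or pseudo-source corresponding to $u$ and $0$ elsewhere (well-defined by Lemma \ref{psource}), and the resulting dimension function on $\Gamma$ is obtained by pulling back along $S(\Gamma) \cong S(\Gamma')$; this assignment is a bijection, so the monoid of dimension functions on $\Gamma$ is free commutative on the maximal sinks and maximal cycles.
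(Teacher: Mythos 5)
Your proposal is correct and follows essentially the same route as the paper: pass to a complete reduction, invoke the invariance of $S(\Gamma)$ under reduction together with Lemma \ref{psource}, and identify isolated vertices and pseudo-sources of the reduction with the maximal sinks and maximal cycles of $\Gamma$ (the correspondence already recorded before Lemma \ref{nonzero}). Your explicit remark that the counts are intrinsic despite non-uniqueness of complete reductions matches the paper's justification that maximal sinks and cycles remain maximal under reduction.
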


\begin{proof}
If $\Gamma'$ is a complete reduction of $\Gamma$ then there are one-to-one correspondences between the isolated vertices (respectively, the pseudo-sources) of $\Gamma'$ and the maximal sinks (respectively, the maximal cycles) of $\Gamma$: Under reduction the maximal sinks or cycles remain maximal because the new cycles created (if any) do not  involve vertices from which a maximal sink or cycle can be reached. 
Now Lemma \ref{psource} gives the desired conclusion.
\end{proof}\\

In particular, although a finite digraph may not have a unique complete reduction, all such must have the same number of isolated vertices and pseudo-sources.\\

%\begin{corollary} 
%Let $\Gamma$ be a finite digraph. The Leavitt path algebra $L(\Gamma)$ has a nonzero finite dimensional quotient if and only if there is an isolated vertex or a pseudo-source of any complete reduction of $\Gamma$.
%\end{corollary} \\

Theorem \ref{27} which classifies all dimension functions of a finite digraph $\Gamma$ is essentially a refinement of  \cite[Theorem 6.4]{ko1} which determines the existence of a nonzero dimension function. 
Any dimension function of $\Gamma$ comes from a finite dimensional representation of $L(\Gamma)$ 
by \cite[Corollary 3.7]{ko1} but a representation may not be uniquely determined  by its dimension function. Theorem \ref{uzun1} in the next section will further refine Theorem \ref{27} by classifying all finite dimensional representations of $L(\Gamma)$ for a row-finite digraph $\Gamma$.

\section{The Classification of The Finite Dimensional Representations of $L(\Gamma)$ }

Let's first show that the reduction algorithm applied to a (row-finite) digraph gives a Morita equivalence at the level of Leavitt path algebras.  

\begin{theorem} \label{Morita} If $\Gamma'$ is a reduction of $\Gamma$ then $L(\Gamma)$ and $L(\Gamma')$ are Morita equivalent, that is, their $($unital$)$ module categories are equivalent. This equivalence preserves the subcategories of finite dimensional modules and modules of finite type. 
\end{theorem}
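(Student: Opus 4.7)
By induction it suffices to prove the theorem when $\Gamma'$ is a one-step reduction of $\Gamma$ eliminating a loopless nonsink $v \in V$. I would translate the statement, via Theorem \ref{teorem}, into an equivalence between the full subcategory $\mathrm{Rep}(\Gamma,I)$ of quiver representations of $\Gamma$ satisfying the isomorphism condition $(I)$ and the analogous subcategory $\mathrm{Rep}(\Gamma',I)$, then exhibit this equivalence directly in quiver language.

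First, define a functor $F : \mathrm{Rep}(\Gamma,I) \to \mathrm{Rep}(\Gamma',I)$ by ``contracting through $v$'': set $F(\rho)(w) := \rho(w)$ for $w \in V' = V \setminus \{v\}$, $F(\rho)(e) := \rho(e)$ for arrows $e$ shared by $\Gamma$ and $\Gamma'$, and $F(\rho)(fg) := \rho(g) \circ \rho(f)$ for each new arrow $fg$ of $\Gamma'$ (here $tf = v = sg$). The key verification is that $F(\rho)$ satisfies $(I)$ at every nonsink $u \in V'$: the arrows out of $u$ in $\Gamma'$ partition into old arrows $e$ with $se=u$, $te \neq v$ together with new arrows $fg$ indexed by pairs with $sf = u$, $tf = v$, $sg = v$; substituting the decomposition $\rho(v) \cong \bigoplus_{sg=v} \rho(tg)$ given by $(I)$ at $v$ into the $\Gamma$-decomposition $\rho(u) \cong \bigoplus_{se=u} \rho(te)$ reproduces precisely the $\Gamma'$-decomposition $F(\rho)(u) \cong \bigoplus_{e' \text{ out of } u} F(\rho)(te')$ required by $(I)$.

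Next I would construct a quasi-inverse $G$: given $\rho'$ satisfying $(I)$ on $\Gamma'$, set $G(\rho')(w) := \rho'(w)$ for $w \neq v$, declare $G(\rho')(v) := \bigoplus_{sg=v} \rho'(tg)$ (a finite direct sum by row-finiteness), and let each $G(\rho')(g)$ be the canonical projection onto the $tg$-summand, so that $(I)$ at $v$ holds tautologically. Then use $(I)$ for $\rho'$ at each $u = sf$ to read off $G(\rho')(f) : \rho'(u) \to G(\rho')(v)$ as the component of the isomorphism $\rho'(u) \cong \bigoplus_{e'} \rho'(te')$ corresponding to the block $\{fg : sg=v\}$ of new arrows. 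The natural isomorphisms $FG \cong \mathrm{id}$ and $GF \cong \mathrm{id}$ follow from the same combinatorial matching, and functoriality on morphisms is immediate from the $\Gamma$- and $\Gamma'$-compatibility of the transition maps.

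Finally, $F$ and $G$ clearly preserve finite type: $F$ does not alter the vector space at any surviving vertex, while $G(\rho')(v)$ is a finite sum of finite-dimensional summands. Total finite-dimensionality is preserved for the same reason, since the two representations differ at the single vertex $v$ by the finite quantity $\sum_{sg=v} \dim \rho'(tg)$. The main obstacle is the verification in the second paragraph that $F(\rho)$ satisfies $(I)$ at every $u \in V'$; this reduces to checking that the bijection between arrows of $\Gamma'$ starting at $u$ and the refined indexing set $\{e : se = u,\; te \neq v\} \sqcup \{(f,g) : sf = u,\; tf = v,\; sg = v\}$ is exactly the one delivered by combining $(I)$ at $u$ and $(I)$ at $v$ in $\Gamma$. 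Once this bookkeeping is pinned down, everything else is formal.
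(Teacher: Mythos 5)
Your proposal is correct and follows essentially the same route as the paper: the functor $F$ is the paper's restriction-plus-composition construction $\rho \mapsto \rho'$, your $G$ is the paper's $\tilde{\sigma}$ with $\tilde{\sigma}(v) := \bigoplus_{sg=v}\sigma(tg)$ and projections/tuples on the arrows through $v$, and the natural isomorphism $GF \cong \mathrm{id}$ you invoke is exactly the paper's $\theta_v = (\rho(te))_{se=v}$. The verification of condition $(I)$ at predecessors of $v$ and the preservation of finite type and finite dimensionality are argued the same way in both.
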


\begin{proof}
We may assume that $\Gamma'=(V',E', s',t')$ is a one step reduction of $\Gamma=(V,E,s,t)$ with $V'=V\setminus \{v\}$ as above. An $L(\Gamma)$-module $M$ is equivalent to a quiver representation $\rho$ of $\Gamma$ satisfying ($I$) by Theorem \ref{teorem}. We will construct the corresponding $L(\Gamma')$-module $M'$ as a quiver representation $\rho'$ of $\Gamma'$ as follows: $\rho'(w):=\rho(w)$ if $w \in V' \>$ and  $\> \rho'(e):=\rho(e)$ if $e \in E\cap E'$. For a new arrow $fg$ we define $\rho'(fg)$ to be the composition $\rho'(sf)  \stackrel{\rho(f)}{\longrightarrow} \rho(v) \stackrel{\rho(g)}{\longrightarrow} \rho'(tg) $. To see that $\rho'$ satisfies $(I)$ at $w \in V'$ we need to combine the isomorphisms $\rho(v) \longrightarrow \bigoplus_{sg=v } \rho(tg)$ and $\rho' (w) =\rho(w) \longrightarrow \bigoplus_{sf=w} \rho(tf)$ given by $(I)$ for $\rho$ at $v$ and $w$ (using that $\bigoplus_{sf=w} \rho(tf)$ can be broken up into summands with $tf=v$ and those with $tf \neq v$). The definition of $\rho(fg)$ yields the desired isomorphism at $w$. If $\phi$ is a morphism between quiver representations of $\Gamma$ satisfying (I) then the morphism $\phi'$ between the corresponding quiver representations of $\Gamma'$ is the restriction of $\phi$ to $\phi'$.\\

Given a quiver representation $\sigma$ of $\Gamma'$ we define the quiver representation $\tilde{\sigma}$ of $\Gamma$ as $\tilde{\sigma}(w):=\sigma(w)$ if $w \in V'$ and $\tilde{\sigma}(v):=\bigoplus_{sg=v} \sigma(tg)$ (note that $tg $ is in $V'$ since $v$ is a loopless non-sink); also $\tilde{\sigma}(e):=\sigma (e)$ if $e \in E \cap E'$, if $tf=v$ then  $\tilde{\sigma}(f):\tilde{\sigma}(sf) \stackrel{(\sigma (fg))_{sg=v}}{\longrightarrow } \tilde{\sigma}(v) = \bigoplus_{sg=v} \sigma(tg)$  and if $sh=v$ then $\tilde{\sigma}(h)$ is the projection from $\tilde{\sigma}(v)=\bigoplus_{sg=v} \sigma (tg)$ to the summand $\sigma (th)$. The condition $(I)$ at $v$ is immediate from the definitions. The definitions of $\tilde{\sigma}(v)$ and $\tilde{\sigma}(f)$  where $sf=w$ and $tf=v$ above yield $(I)$ for $\tilde{\sigma}$ at $w\neq v$ (using $t'fg=tg$). If $\psi$ is a morphism between quiver representations of $\Gamma'$ then the morphism $\tilde{\psi}$ between the corresponding quiver representations of $\Gamma$ is given by $\tilde{\psi} (w):= \psi(w)$ for $w\neq v$ and $\tilde{\psi }(v):=\bigoplus_{sg=v} \psi (tg)$.\\

By construction $(\tilde{\sigma})' =\sigma$ and $(\tilde{\psi})'=\psi$.  An isomorphism $\theta: \rho \longrightarrow \tilde{(\rho')}$ is given by $\theta_v=(\rho(te))_{se=v }$ and  $\theta_w=id_{\rho(w)}$ for $w \neq v$. Since $\theta$ is compatible with  $\tilde{(\phi')}$, the unital module categories of $L(\Gamma)$ and $L(\Gamma')$ are equivalent. Both constructions $\> \> \tilde{} \> \> $ and $ \> ' \>$ preserve finite support and finite dimensionality at every vertex, hence the subcategories of finite dimensional modules and modules of finite type are preserved. 
\end{proof}\\

\begin{remark}
An easy example of a Morita equivalence which does not preserve the subcategory of finite dimensional modules is given by the Leavitt path algebras consisting of a single vertex (which is isomorphic to $\mathbb{F}$) and the infinite line graph with one sink 

$$\Gamma : \qquad \xymatrix{& \cdots \> \> \ar [r] &{\bullet} \ar [r] & {\bullet} \ar [r] & {\bullet} & }$$
(which is isomorphic to $M_{\infty}(\mathbb{F})$, infinite matrices with finitely many nonzero entries.) The Morita equivalence is given by tensoring with the $(\mathbb{F}, M_{\infty}(\mathbb{F}))$-bimodule $\mathbb{F}^{(\mathbb{N})}$, finite $\mathbb{F}$-sequences. The latter has no finite dimensional modules (by Theorem 6.4 of \cite{ko1} this also follows from Theorem \ref{uzun1} below) since $\Gamma$ has no cycles and one maximal sink but with infinitely many predecessors.
\end{remark}

The proof of Theorem \ref{Morita} above showing that the module categories of $L:=L(\Gamma)$ and $L':= L(\Gamma')$ are equivalent is constructive and explicit at the level of quiver representations. The quiver representation for the reduction $\Gamma'$ is obtained by restricting the quiver representation of $\Gamma$ to the vertices of $\Gamma'$. In the opposite direction the representation at the omitted vertex $v$ is recovered by taking the direct sum of the representations at $\{te \vert se=v \}$. 
 A more typical Morita equivalence would be given by an $(L,L')$-bimodule $P$ (a progenerator for $\mathfrak{M}_{L'}$) and $(L,L')$-bimodule $Q$  (a progenerator for $\mathfrak{M}_{L}$) such that $ \underline{\> \> \> \> }\otimes_L P$ and $\underline{\>\> \> \> } \otimes_{L'} Q$ give the equivalences between the categories $\mathfrak{M}_{L'}$ and $\mathfrak{M}_{L}$ . Identifying $L'$ with the subalgebra $\{ \alpha \in L \mid v \alpha=0=\alpha v \} $ as above we have $P= \{ \alpha \in L \mid  \alpha v=0 \} = L' \oplus vL'$  and $Q= \{ \alpha \in L \mid  v\alpha =0 \} = \oplus_{_{v\neq w \in V}} wL $ .   \\

Next we want to classify finite dimensional modules of the Leavitt path algebra $L(\Gamma)$ of a row-finite digraph $\Gamma$ by determining all indecomposable finite dimensional modules. First we reduce the problem to a finite digraph $\Lambda$ (the support subgraph of the module). Then we apply the reduction algorithm to $\Lambda$ to obtain a disjoint union of loops and isolated vertices. Now the corresponding module is the direct sum of submodules associated to an isolated vertex or a loop and the classification of these is standard linear algebra. The result can then be pulled back to $L(\Gamma)$ via the explicit Morita equivalence of Theorem \ref{Morita}.

\begin{theorem} \label{uzun1}
A finite dimensional $L_{\mathbb{F}}(\Gamma)$-module where $\Gamma$ is a row-finite digraph has a functorial direct sum decomposition with the support subgraph of each summand being the predecessors of a maximal sink or a maximal cycle with finitely many predecessors. A summand $M$ corresponding to a sink $w$ is uniquely and completely determined by $dim_{\mathbb{F}} (Mw)$ (up to isomorphism). This summand is indecomposable if and only if $dim_{\mathbb{F}}(Mw)=1$ if and only if it is simple. A summand corresponding to a cycle decomposes into a direct sum of primary submodules, each associated with an irreducible polynomial $f(x)$ in $\mathbb{F}[x]$ with $f(0)=1$. The indecomposable summands of a primary submodule are uniquely and completely determined (up to isomorphism) by a positive integer, this integer is 1 if and only if the indecomposable is simple. 

\end{theorem}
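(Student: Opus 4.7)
The plan is to follow the three-stage reduction outlined in the paragraph preceding the theorem. First, I would replace the row-finite $\Gamma$ by the finite support subgraph $\Gamma_{M}$ and regard $M$ as a full-support module over $L(\Gamma_{M}) \cong L(\Gamma)/I_{M}$. Then I would apply the reduction algorithm to $\Gamma_{M}$ and invoke the explicit Morita equivalence of Theorem \ref{Morita} to pass to a complete reduction $\Gamma_{M}'$, where the classification becomes elementary. Finally, I would identify the isolated vertices and pseudo-sources of $\Gamma_{M}'$ with the maximal sinks and maximal cycles of $\Gamma$ and pull everything back to $L(\Gamma)$.

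For the first stage, $V_{M}$ is finite (since $M = \bigoplus_{v \in V} Mv$ is finite dimensional) and closed under predecessors in $\Gamma$ (because $V \setminus V_{M}$ is hereditary). Lemma \ref{exits} says the cycles of $\Gamma_{M}$ have no exits in $\Gamma_{M}$. I would show that every sink and every cycle appearing in $\Gamma_{M}$ is actually maximal in $\Gamma$: for a sink $w \in V_{M}$, condition (I) forces $w$ to be a sink of $\Gamma$ as well (otherwise all arrows out of $w$ land outside $V_{M}$, making $Mw = 0$); if any cycle $C$ of $\Gamma$ had a path to $w$, predecessor-closure would place $C$ inside $\Gamma_{M}$ while the required exit from $C$ toward $w$ would itself be forced into $V_{M}$, violating Lemma \ref{exits}. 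The analogous argument, now using a dimension count around the intruding cycle via (I), rules out a cycle of $\Gamma$ leading to a cycle of $\Gamma_{M}$. Finiteness of $V_{M}$ combined with predecessor-closure guarantees that each such maximal sink or cycle has only finitely many predecessors in $\Gamma$, and that $V_{M}$ is the disjoint union of the predecessor sets $V_{\leadsto u}$ as $u$ ranges over these bottom elements.

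For the second and third stages, Theorem \ref{Morita} transfers $M$ to a full-support finite-dimensional module $M'$ over $L(\Gamma_{M}')$. By Lemma \ref{nonzero} every vertex of $\Gamma_{M}'$ is isolated or a pseudo-source, and condition (I) rules out any outgoing arrow at a pseudo-source $v$ other than its loop (an extra arrow $v \to w$ would produce $M'v \cong M'v \oplus M'w$, forcing $M'w = 0$ and contradicting full support). Hence $\Gamma_{M}'$ is literally a disjoint union of isolated vertices and one-loop vertices, so $L(\Gamma_{M}')$ splits as a direct sum of copies of $\mathbb{F}$ (one per isolated vertex) and copies of $\mathbb{F}[x, x^{-1}]$ (one per loop vertex, by Lemma \ref{ez}). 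This yields the functorial direct sum decomposition of $M$. On each piece the classification is standard: a summand at an isolated vertex is a finite-dimensional $\mathbb{F}$-vector space, determined up to isomorphism by its dimension, semisimple with one-dimensional simples; a summand at a loop is a finite-dimensional module over the PID $\mathbb{F}[x, x^{-1}]$, whose primary decomposition splits it into indecomposable cyclic modules $\mathbb{F}[x, x^{-1}]/(f(x)^{n})$ with $f$ irreducible in $\mathbb{F}[x]$ and $f(0) \neq 0$ (since $x$ is a unit in $\mathbb{F}[x, x^{-1}]$), uniquely normalized to $f(0) = 1$, with the simples corresponding to $n = 1$. Pulling back through Theorem \ref{Morita} and the projection $L(\Gamma) \twoheadrightarrow L(\Gamma_{M})$ gives the stated classification for $L(\Gamma)$.

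The main obstacle I expect is the bookkeeping of the first stage: cleanly matching the bottom elements of $\Gamma_{M}$ with the maximal sinks and cycles of $\Gamma$ that have finitely many predecessors, since Lemma \ref{exits} only forbids exits within $\Gamma_{M}$ and a priori a maximal element of $\Gamma_{M}$ could be dominated by something in $\Gamma \setminus \Gamma_{M}$. The decisive tool is the predecessor-closure of $V_{M}$ coming from the hereditary property of $V \setminus V_{M}$; once this is in hand the arguments via condition (I) and Lemma \ref{exits} are short. Everything else is either built into the machinery of Theorems \ref{teorem} and \ref{Morita} or is classical linear algebra over $\mathbb{F}$ and $\mathbb{F}[x, x^{-1}]$.
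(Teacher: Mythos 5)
Your proposal follows the paper's proof essentially verbatim: restrict to the finite support subgraph, pass to a complete reduction via the explicit Morita equivalence of Theorem \ref{Morita}, classify the resulting modules over $\mathbb{F}$ and $\mathbb{F}[x,x^{-1}]$ by elementary linear algebra and the structure theory of modules over a PID, and pull everything back; your extra care in the first stage (matching sinks and cycles of $\Gamma_M$ with maximal ones in $\Gamma$ having finitely many predecessors) only fills in detail the paper leaves implicit. The one quibble is your claim that $V_M$ is the \emph{disjoint} union of the predecessor sets $V_{\leadsto u}$ --- these sets can overlap, as the paper itself notes --- but since the direct-sum decomposition of $M$ comes from the idempotent decomposition of $L(\bar{\Gamma}_M)$ rather than from disjointness of supports, this does not affect the argument.
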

\begin{proof}
The support subgraph $\Lambda:=\Gamma_{_M}$ of a finite dimensional $L(\Gamma)$-module $M$ is finite and its cycles have no exits by Lemma \ref{exits}. The $L(\Gamma)$-module structure is the restriction of the $ L(\Gamma)/I_{_M}\cong L(\Lambda)$-module structure.  A complete reduction  $\bar{\Lambda}$ of $\Lambda$ is a disjoint union of loops (in one-to-one correspondence with the cycles of $\Lambda$) and isolated vertices (in one-to-one correspondence with the sinks of $\Lambda$). 
We will work with the corresponding  $L(\bar{\Lambda})$-module via the Morita equivalence of Theorem \ref{Morita}.\\

$L(\bar{\Lambda})$ is isomorphic to the direct sum of its subalgebras given by the connected components of $\bar{\Lambda}$ which are loops or isolated vertices. Each subalgebra is of the form $L(\tilde{\Lambda})v$ and isomorphic to $\mathbb{F}$ if $v$ is a sink or $\mathbb{F}[x,x^{-1}]$ otherwise (where the loop $e_v$ at $v$ corresponds to $x$ and $e_v^*$ corresponds to $x^{-1}$). Similarly, any $L(\tilde{\Lambda})$-module N has a direct sum of decomposition $N=\oplus Nv$ where the subspaces $Nv$ are submodules in this case.
%Then for each vertex $v$ of $\bar{\Lambda}$, $Nv$ is a submodule of $N$ (not just a subspace).
 %Also $L(\bar{\Lambda})$ is a product with factors (isomorphic to)  $\mathbb{F}$ corresponding to each isolated vertex and $\mathbb{F}[x,x^{-1}]$ corresponding to each loop. 
The $L(\bar{\Lambda})= \oplus L(\bar{\Lambda})v$ module structure of $N=\oplus Nv$ is given by coordinate-wise multiplication (i.e., $Nv$ is an $L(\bar{\Lambda})$-module and all other factors $L(\bar{\Lambda})w$ with $w \neq v$ annihilate $Nv$).\\

The support subgraph $\bar{\Lambda}_{Nv}$ of the $L(\bar{\Lambda})$-submodule $Nv$ is just the isolated vertex $v$ if $v$ is a sink and it consists of the vertex $v$ and the loop $e_v$ at $v$ when $v$ is not a sink. Also $L(\bar{\Lambda}_{Nv})\cong L(\bar{\Lambda})v$ which is (isomorphic to) $\mathbb{F}$ or $\mathbb{F}[x,x^{-1}]$ depending on whether $v$ is a sink or not. When $v$ is a sink, $Nv$ is a finite dimensional vector space determined (up to isomorphism) by its dimension. \\

%The classification of $Nv$ as an $L(\bar{\Lambda})v$-module is basic linear algebra (specifically, rational canonical forms).   \\

 When $v$ is not a sink, $Nv$ is a finite dimensional $\mathbb{F}[x,x^{-1}]$-module 
which is a direct sum of its primary components each corresponding to an irreducible polynomial $f(x)$ in $\mathbb{F}[x]$ with $f(0)=1$. (These polynomials are the irreducible divisors of the characteristic polynomial of $e_v$ considered as an endomorphism of the vector space $Nv$.) The primary component corresponding to $f(x)$ is a direct sum of indecomposables, each specified by a positive integer $n$ (where $f(x)$ is the minimal polynomial of the linear transformation given by $e_v$ restricted to this primary submodule and $f(x)^n$ is its characteristic polynomial). The indecomposable is simple if and only if $n=1$.\\

The canonical decomposition $N=\oplus Nv$ of the $L(\bar{\Lambda})$-module gives a canonical decomposition of the corresponding $L(\Lambda)$-module via the explicit functorial Morita equivalence of Theorem \ref{Morita}. This decomposition can further be pulled back, via the epimorphism $L(\Gamma) \longrightarrow L(\Lambda)$, to a canonical decomposition of the original $L(\Gamma)$-module. When $v$ is a sink in $\bar{\Lambda}$, the support of the $L(\Gamma)$-module $M$ corresponding to the $L(\bar{\Lambda})$-module $Nv$ is the predecessors of the maximal sink $v$ of $\Gamma$. For each vertex $u$ of $\Gamma$, $Nu$ is isomorphic to a direct sum of copies of the vector space $Nv$ indexed by the paths from $u$ to $v$. For any arrow $e$ of $\Gamma$ each path from $te$ to $v$ yields a path from $se$ to $v$ starting with $e$. The linear transformation (projection) given by $e$ from $Mse$ to $Mte$ is identity on the summands corresponding to paths related as above and it is 0 otherwise.\\

If $v$ is not a sink in $\bar{\Lambda}$, so there is the loop $e_v$ based at $v$, then the support of the $L(\Gamma)$-module $M$ corresponding to $Nv$ is the predecessors of the maximal cycle $C_v$ in $\Gamma$ corresponding to the loop $e_v$ in $\bar{\Lambda}$. The vertex $v$ of $\bar{\Lambda}$ is also a vertex in $\Lambda$ (and hence $\Gamma$ since $\Lambda$ is a subgraph of $\Gamma$): it is
 the only vertex of the cycle $C_v$ which is not eliminated during the complete reduction of $\Lambda$ to $\bar{\Lambda}$. Let $\tilde{e}_v$ be the arrow on the cycle $C_v$ starting at $v$. Now $Mw \cong Nv$ for each vertex $w$ on $C_v$ and $Mu$ is (isomorphic to) the direct sum of copies of $Nv$ indexed by the paths from $u$ to $C_v$ (equivalently, the paths from $u$ to $v$ not containing $C_v$). The linear transformation corresponding to $\tilde{e}_v$ is given by $e_v$ while all other arrows on $C_v$ give $id_{Nv}$, the identity transformation. For any arrow $e$ of $\Gamma$, different from $\tilde{e}_v$ the linear transformation given by $e$ from $Mse$ to $Mte$ is the projection defined just as in the paragraph above. \\

The decompositions of the $L(\bar{\Lambda})$-module $Nv$ into primary summands and indecomposables carry over functorially to the $L(\Gamma)$-module $M$ and the simple $L(\bar{\Lambda})$-modules correspond to simple $L(\Gamma)$-modules.
\end{proof}\\

Consequently,  there are two kinds of finite dimensional indecomposable unital $L(\Gamma)$-modules when $\Gamma$ is row finite: if $M$ is of the first kind then $M$ is completely determined by a maximal sink $v$ with finitely many predecessors. The subspace $Mw$ for any vertex $w$ has dimension equal to the number of paths from $w$ to $v$ (hence the support of $M$ is the predecessors of $v$). The linear transformations given by the arrows are projections. These indecomposables are simple. \\

An indecomposable $M$ of the second kind is determined by a maximal cycle $C$ with finitely many predecessors, an irreducible $f(x) \in \mathbb{F}[x]$ with $f(0)=1$ and a positive integer $n$. Now, $Mv\cong \mathbb{F}[x]/ f(x)^n$ for any vertex $v$ on $C$. The linear transformation given by one of the arrows on $C$ corresponds to multiplication by $x$, the remaining arrows on $C$ give the identity transformation (isomorphism type of $M$ is independent of which arrow corresponds to multiplication by $x$). For any vertex $w$, $Mw$ is isomorphic to a direct sum of copies of $Mv$ indexed by the paths from $v$ to $C$ (which do not traverse $C$). The arrows that are not on the cycle $C$ give projections. These indecomposables are simple if and only if $n=1$.\\

 If $M$ and $N$ are simple then any nontrivial extension of $M$ by $N$ is indecomposable. Our classification shows that the only indecomposables of length 2 correspond to pairs $(C,f(x)^2)$, hence:

 \begin{corollary}
 If $M$ and $N$ are finite dimensional simple $L(\Gamma)$-modules then $Ext(M,N)=0$ unless $N\cong M$ and $M$ corresponds to a pair $(C,f(x))$ as above. 
  \end{corollary}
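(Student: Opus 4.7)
The plan is to exploit the equivalence between non-split extensions and indecomposable modules of length two, and then read off which length-two indecomposables exist from the classification in Theorem \ref{uzun1}.

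Starting from a non-split short exact sequence $0 \to N \to E \to M \to 0$ with $M$ and $N$ finite dimensional simples, the module $E$ is again finite dimensional and has composition length exactly two. First I would argue that $E$ is indecomposable: any nontrivial direct sum decomposition $E \cong A \oplus B$ forces each summand to have length one (so to be simple), and matching composition factors identifies $\{A,B\}$ with $\{M,N\}$, making the sequence split, contrary to assumption. Thus $E$ is an indecomposable finite dimensional $L(\Gamma)$-module of length two.

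Next I would invoke Theorem \ref{uzun1} to enumerate the possibilities for $E$. Indecomposables of the first kind (corresponding to a maximal sink $w$ with finitely many predecessors) are simple by the theorem, hence of length one and thus ruled out. Indecomposables of the second kind are parametrized by triples $(C, f(x), n)$ where $C$ is a maximal cycle with finitely many predecessors, $f(x)$ is irreducible with $f(0)=1$, and $n \geq 1$. For such a module $E$ and any vertex $v$ on $C$, we have $Ev \cong \mathbb{F}[x]/f(x)^n$ as an $\mathbb{F}[x]$-module (with the action of $x$ given by the loop $e_v$ in the reduction). The $\mathbb{F}[x]$-submodule chain $0 \subset (f(x)^{n-1})/(f(x)^n) \subset \cdots \subset (f(x))/(f(x)^n) \subset \mathbb{F}[x]/f(x)^n$ pulls back, under the explicit description of $E$ from Theorem \ref{uzun1} and the Morita equivalence of Theorem \ref{Morita}, to a chain of $L(\Gamma)$-submodules whose successive quotients are each isomorphic to the simple module associated to $(C, f(x))$. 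Hence the composition length of such an indecomposable equals $n$, and the unique simple composition factor is the one attached to $(C, f(x))$.

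Requiring length two forces $n = 2$, so both composition factors of $E$ are isomorphic to the simple associated with $(C, f(x))$. Since these factors are $M$ and $N$, we conclude $M \cong N$ and $M$ is of the form $(C, f(x))$ as claimed. The only place where real care is needed is the identification of the $L(\Gamma)$-submodule lattice of a type (ii) indecomposable with the $\mathbb{F}[x]$-submodule lattice of $\mathbb{F}[x]/f(x)^n$; this is where I would lean most heavily on the explicit quiver-representation description from Theorem \ref{uzun1}, together with the observation that for a vertex $w$ not on $C$ the space $Ew$ is a direct sum of copies of $Ev$ indexed by paths $w \leadsto v$ avoiding $C$, on which the arrows act by projections, so any $\mathbb{F}[x]$-submodule of $Ev$ generates an $L(\Gamma)$-submodule with the analogous description.
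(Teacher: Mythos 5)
Your proposal is correct and follows essentially the same route as the paper, which likewise observes that a non-split extension of simples is an indecomposable of length two and then reads off from the classification in Theorem \ref{uzun1} that the only such indecomposables are those attached to pairs $(C,f(x)^2)$, both of whose composition factors are the simple for $(C,f(x))$. You merely spell out the details (indecomposability of the middle term, and the length count via the submodule lattice of $\mathbb{F}[x]/f(x)^n$ transported through the Morita equivalence) that the paper leaves implicit.
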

  
 The corollary above does not hold when we don't assume that $M$ and $N$ are finite dimensional. There are indecomposable $L(\Gamma)$-modules of arbitrary length with several non-isomorphic factors \cite{ko2}.\\

Any finite dimensional $L(\Gamma)$-module $M$ is, up to isomorphism, a unique (up to ordering) direct sum of the indecomposables described above. Thus Theorem 6.4 of \cite{ko1} stating that $L(\Gamma)$ has a nonzero finite dimensional module if and only if $\Gamma$ has a maximal sink or cycle with finitely many predecessors is now a consequence of Theorem \ref{uzun1}. While the existence of a nonzero finite dimensional representation of $L(\Gamma)$ depends only on the digraph $\Gamma$, 
%(by \cite[Theorem 6.4]{ko1}), 
the classification of all finite dimensional representations depends also on the coefficient field.  \\

When the field $\mathbb{F}$ is algebraically closed the simple modules obtained from a maximal cycle (with finitely many predecessors) are parametrized by $\mathbb{F}^\times$, the nonzero elements of the field. The indecomposables require an additional parameter, a positive integer. In particular, the category of finite dimensional representations of the Leavitt path algebra $L(\Gamma)$ of a finite digraph $\Gamma$ is tame in contrast to the finite dimensional quiver representations of $\Gamma$ which are wild unless the connected components of the undirected $\Gamma$ are Dynkin diagrams of types $A, \> D, \> E, \> \tilde{A}, \> \tilde{D}, \> \tilde{E}$ by a theorem of Gabriel \cite{dw05}. The algebra $L(\Gamma)$ has finite representation type, that is, there are only finitely many finite dimensional indecomposables (up to isomorphism) if and only if $\Gamma$ has no maximal cycles with finitely many predecessors and finitely many maximal sinks with finitely many predecessors.\\

 %An $L(\Gamma)$-module $M$ is of \textbf{finite type} if $Mv$ is finite dimensional for all $v \in V$. 
 We can almost remove the qualification "with finitely many predecessors" if we consider modules of finite type instead of finite dimensional modules. Each maximal sink or cycle with infinitely many predecessors contributes $L(\Gamma)$-module of finite type. The converse, however, does not hold. There may be no sinks or cycles in $\Gamma$ but $L(\Gamma)$ may still have a module of finite type. For example
  %Let $\Gamma=( \mathbb{N}, s,t)$ where $sn=n$ and $tn=n+1$. 
   $$\Gamma : \xymatrix{& {\bullet}^{v_1} \ar [r] ^{e_1} & {\bullet}^{v_2} \ar [r] ^{e_2} & {\bullet}^{v_3} \> \> \cdots & }$$
  The module $M$ of finite type is given by $Mv_i:=\mathbb{F}$, $i=1,2, \cdots$  and all arrows being $id_{\mathbb{F}}$.\\ 
  %There is a necessary and sufficient condition for $L(\Gamma)$ to have a module of finite type involving a descending chain condition in addition to the existence of a maximal sink or cycle. This will be pursued elsewhere. 

 \begin{example}
 The two digraphs below have  no maximal sinks or maximal cycles, hence their Leavitt path algebras have no nonzero finite dimensional representations.
 
 $$   \xymatrix{  &\> \> { \bullet}
\ar@(ul,ur) \ar@(dl,dr)  \ar@/^1pc/[r] \ar[r]  & {\bullet} }  \qquad \quad   \xymatrix{& {\bullet} \ar@/^1pc/[r] &\ar [l]  {\bullet}  \ar [r]  & {\bullet} \ar@/^1pc/[l] }$$\\

\end{example}

\begin{example}
Finite dimensional representations of  the Leavitt path algebras of the five digraphs below are all equivalent since they have no maximal sinks and the predecessor subgraphs of their unique maximal cycle are isomorphic. \\

%$$ \xymatrix{  &\> \> { \bullet} \ar[r]
%\ar@(ul,ur)   &{ \bullet} 
%\ar@(ul,ur) \ar@(dl,dr) } 
 %$$\\

$$  \xymatrix{  &\> \> { \bullet}
\ar@(ul,ur) }  \qquad   \quad \xymatrix{  &\> \> { \bullet} \ar[r]
\ar@(ul,ur)   &{ \bullet} } \qquad  \qquad \quad \quad \qquad \xymatrix{ {\bullet} \ar@(ul,dl) \ar@{->}[r] \ar@{->}[dr] &\bullet \\
                 & \bullet  }$$

$$ \xymatrix{  &\> \> { \bullet} \ar[r]
\ar@(ul,ur)   &{ \bullet} 
\ar@(ul,ur) \ar@(dl,dr) } \quad \qquad \quad \xymatrix{  &\> \> { \bullet} \ar[r]
\ar@(ul,ur)   &\> \> { \bullet} \ar[r]
\ar@(ul,ur)   & \>\>  { \bullet}  \ar[r] \ar@(ul,ur)   &\> \> { \bullet} } 
 $$\\\\

%$$  \xymatrix{  &\> \> { \bullet}
%\ar@(ul,ur) } \xymatrix{ \longrightarrow \quad  { \bullet}
%\quad  \ar@(ul,ur) \cdots  &  { \bullet}
%\>   \ar@(ul,ur) } \xymatrix{ \longrightarrow   { \bullet} }$$\\\\\

\end{example}

A somewhat different viewpoint is to start with the finite dimensional $L(\Gamma)$-module $M$ and to determine all its invariants. We arbitrarily fix a base vertex $w$ for each maximal cycle with finitely many predecessors and let $W$ be the set of the base vertices of maximal cycles and the maximal sinks with finitely many predecessors. Regarding $M$ as an $L(\Gamma_{_M})$-module (where $\Gamma_{_M}=(V_{_M}, E_{_M})$ is the support subgraph of $M$), we get the corresponding $L(\bar{\Gamma}_{_M})$-module $N$ using the explicit Morita equivalence of Theorem \ref{Morita}. Here $\bar{\Gamma}_{_M}$ is the complete reduction of $\Gamma_{_M}$ in which all vertices not in $W$ are eliminated. The vertex set of $\bar{\Gamma}_{_M}$ is $V_{_M} \cap W$
since $\bar{\Gamma}_{_M}$ is a disjoint union of loops and isolated vertices as explained above.\\

In the decomposition $N=\oplus_{w \in W} Nw$ each $Nw$ is an $L(\bar{\Gamma}_{_M})$-submodule. Let $M_{\leadsto w}$ be the $L(\Gamma_{_M})$-module corresponding to the $L(\bar{\Gamma}_{_M})$-module $Nw$ via Theorem \ref{Morita}. Regarding $M_{\leadsto w}$ as an $L(\Gamma)$-module via the epimorphism $L(\Gamma) \longrightarrow L(\Gamma_{_M})$ we have $M=\oplus_{w \in W} M_{\leadsto w} \> $; this is the functorial decomposition of Theorem \ref{uzun1}. The support of $M_{\leadsto w}$ is $\Gamma_{\leadsto w} \> $ (note that these support subgraphs may not be disjoint for distinct $w$).  \\

 The submodule $M_{\leadsto w}$ is determined by the vector space $Mw$ if $w$ is a sink, it is determined by the vector space $Mw$ and the  invertible linear operator on $Mw$ given by right multiplication with $C_w$, the cycle containing $w$ (considered as an element of $L(\Gamma)$). 
 If  $f:M \longrightarrow M'$ is an $L(\Gamma)$-module homomorphism where $M$ and $M'$ are both finite dimensional then we may consider $f$ an $L(\Lambda)$-morphism where $\Lambda=\Gamma_{_M} \cup \Gamma_{_{M'}} \>$. The complete reduction $\bar{\Lambda}$ of $\Lambda$ with vertex set $W \cap (V_{_M} \cup V_{_{M'}})$ is again a disjoint union of loops and isolated vertices. Since $\bar{f} (Nw) \subseteq N'w$ for the corresponding $L(\bar{\Lambda})$-morphism $\bar{f} : N \longrightarrow N'$ we have $f(M_{\leadsto w}) \subseteq M_{\leadsto w}' \>$.\\

The $L(\Gamma)$-morphism $f \mid_{_{M_{\leadsto w}}} : M_{\leadsto w} \longrightarrow {M'}_{\leadsto w}$ is determined by the linear transformation $f \mid_{_{Mw}} :Mw \longrightarrow M'w $ completely. When $w$ is not a sink the linear transformation $f \mid_{_{Mw}} $ intertwines with $C_w$ (equivalently, $f \mid_{_{Mw}}$ is an $\mathbb{F}[x,x^{-1}]$-morphism where $x$ acts as $C_w$). Conversely, for any finite subset of $W$, given arbitrary finite dimensional vector spaces $Mw$, invertible operators $C_w: Mw \longrightarrow Mw $ when $w$ is not a sink we can construct an $L(\bar{\Lambda})$-module $N$ with $Nw :=Mw$ and the linear transformation corresponding to $e_w$ (the loop based at w) is $C_w \>$. The corresponding $L(\Gamma)$-module $M$ (via Theorem \ref{Morita} and the epimorphism $L(\Gamma) \longrightarrow L(\Gamma_{_M})$) has the prescribed $Mw$ and $C_w$ for each $w \in W$. Similarly, an $L(\Gamma)$-morphism $f :M \longrightarrow M'$ with arbitrarily specified $f \mid_{_{Mw}}$ satisfying the intertwining conditions can also be realized.\\

Paraphrasing the discussion above: The category of finite dimensional $L(\Gamma)$-modules for a row-finite digraph $\Gamma$ is 
%Paraphrasing: parafreyzing: Ayni seyi degisik ifade etmek
equivalent to the direct sum of the categories indexed by $W$ where each summand corresponding to a sink is the category of finite dimensional vector spaces and each remaining summand is the category of finite dimensional $\mathbb{F}[x,x^{-1}]$-modules. That is, the category of finite dimensional $L(\Gamma)$-modules for a row-finite digraph $\Gamma$ is equivalent to the category of finite dimensional $L(\Lambda)$-modules where $\Lambda$ is a disjoint union of loops (corresponding to the maximal cycles of $\Gamma$ with finitely many predecessors) and isolated vertices (corresponding to the maximal sinks of $\Gamma$ with finitely many predecessors):

\begin{theorem} \label{notasyon}
If $\Gamma$ is a row-finite digraph then $$\mathfrak{M}_L^{^{fd}} \backsimeq \Big( \bigoplus_{_{\mathcal{S}} }
\mathfrak{M}_{\mathbb{F}}^{^{fd}} \Big) \oplus \Big(\bigoplus_{_{\mathcal{T}} } \mathfrak{M}_{\mathbb{F} [x,x^{-1}]}^{^{fd}}\Big)$$
where $L:=L_{\mathbb{F}}(\Gamma)$ ,  $\mathfrak{M}_A^{^{fd}}$ is the category of finite dimensional $A$-modules, $S$ is the set of maximal sinks with finitely many predecessors in $\Gamma$, $\mathcal{T}$ is the set of maximal cycles with finitely many predecessors in $\Gamma$ and $\backsimeq$ denotes equivalence of categories. 
\end{theorem}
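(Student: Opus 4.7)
The plan is to assemble the equivalence from the explicit functorial decomposition already built in Theorems \ref{uzun1} and \ref{Morita}, together with the component-wise identification of $L(\bar{\Lambda})$-submodules at loops and isolated vertices. Concretely, I will define the functor $\Phi: \mathfrak{M}_L^{^{fd}} \longrightarrow \Big(\bigoplus_{\mathcal{S}} \mathfrak{M}_{\mathbb{F}}^{^{fd}}\Big) \oplus \Big(\bigoplus_{\mathcal{T}} \mathfrak{M}_{\mathbb{F}[x,x^{-1}]}^{^{fd}}\Big)$ on objects by $\Phi(M)_w := Mw$, where for $w \in \mathcal{T}$ the $\mathbb{F}[x,x^{-1}]$-action on $Mw$ is given by right multiplication by the cycle $C_w$ (which is invertible on $Mw$ by Lemma \ref{ez} applied inside the support subgraph, since cycles in $\Gamma_{_M}$ have no exits by Lemma \ref{exits}). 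On morphisms, $\Phi(f)_w := f|_{Mw}$, which is a linear map for $w \in \mathcal{S}$ and intertwines with $C_w$ for $w \in \mathcal{T}$ since $f$ is $L(\Gamma)$-linear. Note that only finitely many $Mw$ are nonzero because $\Gamma_{_M}$ is finite, so $\Phi(M)$ lies in the direct sum (not the product).

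Next, I will show that $\Phi$ is essentially surjective and fully faithful. Given data $(V_w)_{w \in \mathcal{S}}$ and $((U_w, C_w))_{w \in \mathcal{T}}$ with finitely many nonzero entries, let $\Lambda$ be the union of the predecessor subgraphs $\Gamma_{\leadsto w}$ for the finitely many relevant $w$; since each such $w$ has finitely many predecessors by assumption, $\Lambda$ is finite. Let $\bar{\Lambda}$ be a complete reduction of $\Lambda$; by the correspondence in Theorem \ref{27} its connected components are isolated vertices indexed by the relevant sinks in $\mathcal{S}$ and loops indexed by the relevant cycles in $\mathcal{T}$. Define an $L(\bar{\Lambda})$-module $N$ by placing $V_w$ at each isolated vertex and $(U_w, C_w)$ (with the loop acting as $C_w$) at each loop-vertex. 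Pulling $N$ back to an $L(\Lambda)$-module via the Morita equivalence of Theorem \ref{Morita}, and then to an $L(\Gamma)$-module via the epimorphism $L(\Gamma) \to L(\Lambda)\cong L(\Gamma)/I_{_N}$, produces an $M$ with $\Phi(M) \cong$ the given data. Full faithfulness follows from the same chain: a morphism $f : M \to M'$ restricts to a morphism of the corresponding $L(\bar{\Lambda})$-modules, which by the direct-sum decomposition $L(\bar{\Lambda}) = \bigoplus_v L(\bar{\Lambda})v$ decomposes as $\bigoplus_w f|_{Nw}$, and each $f|_{Nw}$ is exactly the $w$-component of $\Phi(f)$.

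The main obstacle — really the only subtle bookkeeping point — is checking that the passage through $\Gamma_{_M} \rightsquigarrow \bar{\Gamma}_{_M}$ is genuinely functorial across different modules, since different $M, M'$ have different support subgraphs. This is handled, as in the paragraphs preceding the theorem statement, by enlarging to $\Lambda := \Gamma_{_M} \cup \Gamma_{_{M'}}$ when working with a morphism $f:M\to M'$, noting that the complete reduction of $\Lambda$ still has vertex set sitting inside $W = \mathcal{S} \cup \mathcal{T}$, and using that $\Phi$ is insensitive to this enlargement because the vertices added contribute $0$ summands (their $M$- and $M'$-components vanish). Once this is checked, associativity of Morita equivalences composes the two equivalences into a single equivalence $\Phi$, and the identification of each $L(\bar{\Lambda})v$-module category with $\mathfrak{M}_{\mathbb{F}}^{^{fd}}$ or $\mathfrak{M}_{\mathbb{F}[x,x^{-1}]}^{^{fd}}$ (via Lemma \ref{ez}) completes the proof.
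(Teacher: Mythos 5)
Your proposal is correct and follows essentially the same route as the paper: the paper's proof is precisely the discussion preceding the theorem, which fixes a base vertex $w$ on each maximal cycle, sends $M$ to the data $(Mw, C_w)_{w\in W}$ via the decomposition $M=\oplus M_{\leadsto w}$ of Theorem \ref{uzun1} and the Morita equivalence of Theorem \ref{Morita}, handles morphisms between modules with different supports by passing to $\Lambda=\Gamma_{_M}\cup\Gamma_{_{M'}}$, and realizes arbitrary data $(Mw,C_w)$ by building an $L(\bar{\Lambda})$-module and pulling back. Your packaging of this as an explicit functor $\Phi$ with essential surjectivity and full faithfulness is just a cleaner formulation of the same argument.
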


We can get a more module theoretic description of the classification of the indecomposable finite dimensional $L(\Gamma)$-modules by unraveling the correspondence between quiver representations and $L(\Gamma)$-modules of Theorem \ref{teorem}:
Given a finite dimensional indecomposable $L(\Gamma)$-module its support $\Gamma_M$ contains either a unique sink $w$ or a unique cycle $C$. If $\Gamma_M$ has a sink $w$ then $M\cong wL(\Gamma)$, a finite dimensional projective $L(\Gamma)$-module which is also simple. These are the only projective indecomposable (or simple) finite dimensional $L(\Gamma)$-modules. 
%Aciklama: yukarda "indecomposable (or simple)':  indecomposable da desek simple da desek ayni sey anlaminda 
Any finite dimensional projective $L(\Gamma)$-module is isomorphic to a direct sum of these $\{wL(\Gamma)\}$ where $w$ is a maximal sink with finitely many predecessors.\\

If $\Gamma_M$ contains a cycle $C$ then $C$ is the unique cycle in $\Gamma_M=\Gamma_{\leadsto C}$ and in $\Gamma_M$ $C$ has no exit. $M$ corresponds to a pair $(C,f(x))$ in the classification, we can recover the polynomial $f(x)$ as follows: if $v=sC$ and $\varphi$ is the linear operator on the vector space $Mv$ given by right multiplication with $C$ then $f(x)$ is $det(1-x\varphi) $, essentially the characteristic polynomial of $\varphi$.\\
%$vert a \vert $: dito:rminint

  Conversely, given $(C,f(x)^n)$ where $C$ is a maximal cycle with finitely many predecessors and $f(x) \in \mathbb{F}[x]$ an irreducible polynomial with constant term 1, the corresponding finite dimensional indecomposable $M$ is $\mathbb{F}[x,x^{-1}]/ (f(x)^n) \otimes vL(\Gamma_{\leadsto C})$ where $v=sC$ and tensor product is over $\mathbb{F}[x,x^{-1}] \cong vL(\Gamma_{\leadsto C})v$ (the isomorphism is by Lemma \ref{ez}). Note that $vL(\Gamma_{\leadsto C})$ is a $(vL(\Gamma_{\leadsto C})v, L(\Gamma_{\leadsto C}))$-bimodule, so $M$ is an $L(\Gamma_{\leadsto C})$-module and hence inherits an $L(\Gamma)$-module structure since $ L(\Gamma_{\leadsto C}) \cong L(\Gamma)/ (V\setminus V_{\leadsto C})$.\\
  
  The dimension function $d(u):=dim^\mathbb{F}(Mu)$ can also be computed in terms of $(C,f(x))$: if $v=sC$ and $P_v^u$  denotes the set of paths $p$ such that $sp=u$, $tp=v$ and $p$ does not contain $C$ then $d(u)= \vert P_v^u\vert degf(x)$. Similarly, when $M$ corresponds to maximal sink $w$ with finitely many predecessors, i.e., $M\cong wL(\Gamma)$ then $d(u)=\vert P_w^u\vert$  where $P_w^u$ is the set of paths from $u$ to $w$.\\
  
  Another description of the indecomposable module $M$ corresponding to the pair $(C, f(x))$ is $vL(\Gamma)/f(C^*)L(\Gamma)$ where $v=sC$ \cite{ko2}. A consequence is that all finite dimensional $L(\Gamma)$-modules are finitely presented. 
  % Aciklama: $f(C^*)L(\Gamma)$ bir sol ideal, hemde module ,   $vL(\Gamma)$ ideal, module ama ideal/ideal olmaz ama module/module olur, tanimlidir. Dolayisiyla  module olarak bakiyoruz herikisinide
  Also $M$ is a rational Chen module if and only if $f(x)=1-x$ and  $M$ is a twisted rational Chen module if and only if $f(x)=1-\lambda x$ where $0\neq \lambda \in \mathbb{F}$ \cite{che15}, \cite{ko2}. If $f(x)$ is an irreducible polynomial (as always $f(0)=1$) then the corresponding module $M$ is simple but not a Chen module. \\
  
  \textbf{Acknowledgement}\\

Both authors were partially supported by TUBITAK grant 115F511.

%\bibitem{AMP} P. Ara, M.A. Moreno, and E. Pardo, Nonstable $K$-theory for graph algebras, Alg. Rep. Thy. 10 (2007), 157--178.

%T.Y. Lam, A First Course In Noncommutative Rings,
%Springer-Verlag 2001.

%\bibitem{3} G. Abrams, G. Aranda Pino, The Leavitt path algebra of a graph, J. Algebra 293 (2) (2005), 319--334.
%\end{thebibliography}

$^*$ Department of Mathematics and Computer Science, \\ \.{I}stanbul K\"{u}lt\"{u}r University, Atak\"{o}y, \.{I}stanbul, TURK\.{I}YE\\ 
E-mail: akoc@iku.edu.tr\\

$^{**}$ Department of Mathematics, \\ University of Oklahoma, Norman, OK, USA \\
E-mail: mozaydin@ou.edu

\end{document}